\documentclass[12pt]{amsart}
\usepackage{amssymb}
\usepackage{epsfig}
\usepackage{amscd}

\setlength{\textwidth}{125mm}
\setlength{\textheight}{185mm}
\frenchspacing
\setlength{\topmargin}{10mm}
\setlength{\oddsidemargin}{18mm}
\setlength{\evensidemargin}{18mm}

 \newcommand{\RR}{\mathbb{R}}

\newcommand{\XX}{X}
\newcommand{\Aut}{{\text{Aut}}}

\newcommand{\br}{{\mathbb R}}

\newcommand{\RP}{{\mathbb{{RP}}}}
\newcommand{\ra}{{\rightarrow}}
\newcommand{\rand}{\partial X}

\newcommand{\xo}{o}
\newcommand{\aL}{\mathfrak{a}}
\newcommand{\bL}{\mathfrak{b}}

\newcommand{\kL}{\mathfrak{k}}
\newcommand{\gL}{\mathfrak{g}}
\newcommand{\nL}{\mathfrak{n}}

\newcommand{\pL}{\mathfrak{p}}

\newcommand{\diag}{\mbox{\rm Diag}}

\newcommand{\Tr}{\mbox{\rm Tr}}

\newcommand{\be}{\begin{eqnarray*}}
\newcommand{\ee}{\end{eqnarray*}}

\newcommand{\st}{\mbox{such}\ \mbox{that}\ }

      \theoremstyle{plain}
      \newtheorem{thm}{Theorem}[section]
      \newtheorem{lemma}[thm]{Lemma}
      \newtheorem{co}[thm]{Corollary}
      \newtheorem{Prop}[thm]{Proposition}

      \theoremstyle{definition}
      \newtheorem{Def}[thm]{Definition}
      \newtheorem*{example}{Example}

\let \cal \mathcal
\begin{document}
\title[Structure of limit set]
{On the limit set of Anosov representations}
\date{}

\author{Inkang Kim}
\address{School of Mathematics,
KIAS, Hoegiro 85, Dongdaemun-gu Seoul, 130-722, Republic of Korea}
\email{inkang@kias.re.kr}
\author{Sungwoon Kim}
\email{sungwoon@kias.re.kr}

\footnotetext[1]{2000 {\sl{Mathematics Subject Classification.}}
22E46, 57R20, 53C35}
\footnotetext[2]{{\sl{Key words and phrases.}}
Real projective structure, radial limit point, horospherical limit point, Anosov representation}
\footnotetext[3]{I. Kim gratefully acknowledges the partial support
of KOSEF Grant (R01-2008-000-10052-0).}
\begin{abstract}
We study the limit set of discrete subgroups arising from Anosov representations. Specially we study the limit set of discrete groups arising from strictly convex real projective structures and Anosov representations from a finitely generated word hyperbolic group
into a semisimple Lie group.
\end{abstract}

\maketitle
\tableofcontents


\section{Introduction}

There has been intensive study about the limit set of rank one
symmetric spaces. Nonetheless it is still mysterious how the limit
set of higher rank symmetric spaces looks like.
In \cite{Ha}, it is analyzed that some Tits neighborhoods of
parabolic fixed points of nonuniform lattices in higher rank
semisimple Lie groups do not include conical limit points, which
is a sharp contrast to real rank one case. In this paper we try to
describe some examples and results related to the linear action
and a geometric structure which arise as a discrete subgroup of
$\mathrm{SL}(d,\br)$. This example naturally arises as convex
projective structures on surfaces. More generally such  groups
appear in so-called Hitchin component of representation variety of
surface group in $\mathrm{SL}(d,\RR)$.

A strictly convex real projective structure on surfaces is a
generalization of hyperbolic structure. Nonetheless if we look at
the action on $\mathrm{SL}(3,\RR)/\mathrm{SO}(3)$ instead of on
$\RP^2$, it is not obvious that we can get the same phenomena as
in $\mathrm{SL}(2,\RR)/\mathrm{SO}(2)$. Yet it shares many
parallel properties since the Hilbert metric associated to the
projective structure is more or less hyperbolic like. This is our
motivation to study limit sets in
$\mathrm{SL}(3,\RR)/\mathrm{SO}(3)$ arising from such a geometric
structure and attempt to classify the limit points. Another
motivation is to compare the action on
$\mathrm{SL}(3,\RR)/\mathrm{SO}(3)$ and the natural linear action
on $\RR^3$. The latter relation will be  investigated in a future paper.

We begin by defining types of limit points on the geometric
boundary of general symmetric spaces. The notions of radial limit point and
horospherical limit point are introduced by Albuquerque in \cite{Al}
and Hattori in \cite{Ha} as in the theory of Kleinian groups.

\begin{Def}[\cite{Ha}] Let $\Gamma$ be a discrete subgroup of
$\mathrm{Isom}(X)$ where $X$ is a symmetric space of
noncompact type. A limit point $\xi \in \partial X$ is \emph{horospherical} if
there exists a sequence $\gamma_n\in \Gamma$ so that for any
horoball $B$ based at $\xi$, $\gamma_n \xo$ is contained  in $B$
for all large $n$.\end{Def}

\begin{Def}[\cite{Al}]
A limit point $\xi\in\rand$ is called a \emph{radial limit point} if
there exists a sequence $\gamma_n\in \Gamma$ \st
$\gamma_n\xo$ converges to $\xi$ in the cone topology and remains
at a bounded distance of the union of closed Weyl chambers with
apex $\xo$ containing the geodesic ray $\sigma_{\xo,\xi}$.
\end{Def}

The notion of conical limit point  is also defined in \cite{Ha}.
The condition is stronger than being a radial limit point.
It is easily seen that every limit point for a uniform lattice is radial.
Hattori \cite{Ha} characterizes exactly radial (conical) limit points for $\mathbb{Q}$-rank $1$ lattices.
He also shows that every limit point for a finitely generated generalized Schottky group in
$\mathrm{SL}(2,\mathbb{R})\times \mathrm{SL}(2,\mathbb{R})$ is horospherical.
It seems to be difficult to classify limit points of general discrete subgroup of higher rank symmetric space.
In this paper, we prove that

\begin{thm}\label{thm:1.3}
Let $\Gamma$ be a discrete subgroup of $\mathrm{SL}(3,\mathbb R)$
arising from a convex real projective structure on a closed surface. Let $X$ be the
symmetric space associated to $\mathrm{SL}(3,\mathbb R)$.
Then the limit set $\Lambda_\Gamma$ of $\Gamma$ in the Furstenberg
boundary of $X$ is homeomorphic
to $S^{1}$ and the limit set $L_\Gamma$ in the geometric boundary
of $X$ splits as a product $S^{1}\times I$ where $I$ is the closed
interval identified with the directions of the limit cone.
\end{thm}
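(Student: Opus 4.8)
The plan is to leverage the fact that a strictly convex real projective structure on a closed surface $S$ makes $\Gamma = \pi_1(S)$ a word-hyperbolic group acting cocompactly on a properly convex domain $\Omega \subset \RP^2$, so that the Hilbert geometry of $\Omega$ is Gromov hyperbolic and its ideal boundary $\partial \Omega$ is a topological circle on which $\Gamma$ acts as a convergence group. The first step is to recall (from work of Benoist and others) that the associated representation $\rho\colon\Gamma\to\mathrm{SL}(3,\RR)$ is Anosov with respect to the minimal parabolic; concretely, for every geodesic ray in $\Omega$ the matrices $\rho(\gamma_n)$ realizing the approach to a boundary point have singular values whose logarithms diverge with a uniform gap, i.e. $\log\sigma_1(\rho(\gamma_n)) - \log\sigma_2(\rho(\gamma_n)) \to \infty$ and likewise for $\sigma_2$ versus $\sigma_3$, and these two gaps grow comparably (this is where strict convexity and cocompactness are used: the translation direction in the Cartan subspace stays in a fixed compact subcone of the open Weyl chamber). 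This uniform regularity is what forces every limit point to be radial and pins down the direction interval $I$.

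Next I would identify the Furstenberg limit set $\Lambda_\Gamma$. The Anosov property provides a continuous, $\rho$-equivariant boundary map $\xi\colon\partial\Gamma \to \mathrm{Flag}(\RR^3)$ sending a point of the circle $\partial\Gamma\cong\partial\Omega\cong S^1$ to the flag $(\ell,P)$ where $\ell$ is the supporting line and $P$ the supporting tangent at that boundary point of $\Omega$; transversality of the Anosov boundary map plus the fact that distinct boundary points of a strictly convex domain have transverse supporting data shows this map is injective, hence a homeomorphism onto its image $\Lambda_\Gamma$, which is therefore a topological circle in the full flag manifold. That the image is exactly the attracting fixed-flag set follows from the standard dynamics of Anosov representations (proximality of $\rho(\gamma)$ for infinite-order $\gamma$, with attracting flag $\xi(\gamma^+)$), so $\Lambda_\Gamma\cong S^1$ as claimed.

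For the geometric boundary statement I would use the polar/Cartan decomposition $G = K\exp(\overline{\aL^+})K$: a point of $\partial X$ in the visual boundary is recorded by a pair consisting of a Furstenberg point (the $K$-part, giving a chamber) and a direction in the closed Weyl chamber $\overline{\aL^+}$, subject to the usual incidence relations on the boundaries of lower-dimensional strata. The key point is that for our $\Gamma$ the relevant directions all lie in the interior of $\aL^+$ and, by the uniform spectral-gap comparison from Step 1, fill out precisely a closed subinterval $I$ of the projectivized open chamber — the \emph{limit cone} of Benoist, which for a Zariski-dense subgroup (the Hitchin/convex-projective image is Zariski dense in $\mathrm{SL}(3,\RR)$) is a convex cone with nonempty interior, here one-dimensional after projectivization, hence an interval. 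One then checks that the map $\Lambda_\Gamma \times I \to L_\Gamma$, $((\ell,P),t)\mapsto$ the visual boundary point in direction $t$ over the chamber determined by $(\ell,P)$, is a continuous bijection from a compact space to a Hausdorff space, hence a homeomorphism; surjectivity onto $L_\Gamma$ uses that any limit point of $\Gamma\xo$ arises from a sequence $\rho(\gamma_n)$ whose $KAK$ data converge, the $K$-parts to a point of $\Lambda_\Gamma$ by Step 2 and the $A$-directions into $I$ by Step 1.

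The main obstacle I expect is the product decomposition: one must show the geometric limit set does not degenerate over parts of $\Lambda_\Gamma$ — i.e. that \emph{every} Furstenberg limit flag is attained with the \emph{full} range of directions $I$, not just some sub-interval depending on the point. This is where a uniformity argument is needed: because $\Gamma$ acts cocompactly on the hyperbolic-like Hilbert metric, the "shape" of the approach (the asymptotic direction in $\aL^+$) along any two geodesic rays is comparable up to bounded error, and one can transport a sequence realizing a given direction over one boundary flag to a nearby one using the transitivity of $\Gamma$ on large portions of $\partial\Omega$ together with continuity of the boundary map; making this rigorous — essentially proving that the fibration $L_\Gamma\to\Lambda_\Gamma$ is trivial — is the technical heart of the argument, and I would handle it by an explicit analysis of how $\log$ of the singular values of $\rho(\gamma)$ depend on the endpoints of the corresponding axis, using the Anosov limit formulae.
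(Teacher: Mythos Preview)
Your approach is broadly correct and parallels the paper in spirit, but the two routes diverge at the key step for the geometric boundary.

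For the Furstenberg part, your argument and the paper's coincide in content: the attracting flag of $\rho(\gamma)$ is $\langle v^+\rangle\subset\langle v^+,v_0\rangle$, and the paper shows directly (by an elementary incidence argument in $\RP^2$) that $\langle v^+,v_0\rangle$ is the tangent line to $\partial\Omega$ at $\langle v^+\rangle$; strict convexity and the $C^1$ regularity of $\partial\Omega$ (Kuiper) then force the map $\partial\Omega\to\Lambda_\Gamma$ to be a homeomorphism. You package this as the Anosov boundary map, which is fine, though the paper's bare-hands argument avoids invoking that machinery.

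The substantive difference is in the product decomposition $L_\Gamma\cong S^1\times I$. You correctly isolate the heart of the matter --- that \emph{every} chamber in $\Lambda_\Gamma$ must see the \emph{full} interval of limit-cone directions --- and propose to prove it by a uniformity argument based on cocompactness and singular-value analysis. The paper does not attempt this: it simply invokes Benoist's theorem (Section~7.5 of \cite{Be1}) asserting that for a Zariski-dense subgroup the intersection of $L_\Gamma$ with any Weyl chamber at infinity, when nonempty, is canonically identified with the full set of directions $\partial\mathcal{L}_\Gamma$. With that black box in hand, the only remaining obstruction to a global product is that two chambers $W_p$, $W_q$ in $\Lambda_\Gamma$ might be \emph{adjacent} (sharing a wall) when $\mathcal{L}_\Gamma$ touches a singular direction; the paper rules this out by showing directly, via the flag description and the transversality criterion $V_i\oplus W_{3-i}=\RR^3$, that $W_p$ and $W_q$ are \emph{opposite} for distinct $p,q\in\partial\Omega$. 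Your Anosov transversality would yield the same conclusion, and your remark that the directions lie in the open chamber interior addresses the same point from the other side.

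So: your route would work, but the uniformity step you flag as the ``technical heart'' is nontrivial --- it is essentially a reproof of part of Benoist's asymptotic-properties paper --- and your sketch (``transport a sequence realizing a given direction\ldots\ using transitivity of $\Gamma$'') is not yet a proof. The paper's approach is shorter precisely because it outsources that step. One minor caution: you assert Zariski density in $\mathrm{SL}(3,\RR)$, but in the Fuchsian (ellipsoid) case the Zariski closure is $\mathrm{SO}(2,1)$ and $I$ degenerates to a point; the paper handles both cases.
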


In addition to Theorem \ref{thm:1.3}, we characterize radial limit
points of $\Gamma$ in the geometric boundary of
$\mathrm{SL}(3,\br)/\mathrm{SO}(3)$.

\begin{thm}\label{thm:1.4}
Let $\Gamma$ be a discrete subgroup of $\mathrm{SL}(3,\mathbb R)$
arising from a convex real projective structure on a closed surface.
Every limit point of $\Gamma$ is horospherical. Furthermore there
is only one radial limit point in each Weyl chamber at infinity
with nonempty limit set of $\Gamma$.
\end{thm}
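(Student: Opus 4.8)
The plan is to exploit the product structure of the limit set $L_\Gamma \cong S^1 \times I$ furnished by Theorem~\ref{thm:1.3}, and the fact that the Hilbert metric on the convex domain $\Omega$ preserved by $\Gamma$ is a Gromov-hyperbolic, cocompact geometry. First I would set up the dictionary between the $\Gamma$-action on $X = \mathrm{SL}(3,\mathbb R)/\mathrm{SO}(3)$ and the cocompact action of $\Gamma$ on the strictly convex domain $(\Omega, d_\Omega)$: every $\gamma \in \Gamma$ is (conjugate to) a biproximal element, so its Cartan projection $\mu(\gamma) = (\log\lambda_1,\log\lambda_2,\log\lambda_3)$ has all three entries distinct and growing linearly in word length, and the two ``thin'' directions of the Cartan projection — the ratio $(\log\lambda_1 - \log\lambda_2)$ versus $(\log\lambda_2-\log\lambda_3)$ — are exactly the coordinate in the interval factor $I$. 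The attracting point in the Furstenberg boundary is the flag (attracting eigenline $\subset$ attracting eigenplane), and under the orbit map $o \mapsto \gamma_n o$ the $X$-geodesic to a limit point $\xi$ has a direction in $\overline{\aL^+}$ recorded by the same $I$-coordinate.

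The horospherical claim is the ``soft'' half. Given a limit point $\xi \in L_\Gamma$, with Furstenberg component $\eta \in \Lambda_\Gamma \cong S^1$, I would pick a sequence $\gamma_n \in \Gamma$ whose attracting fixed points in $\partial\Omega$ converge to the point of $\partial\Omega$ corresponding to $\eta$ and whose translation lengths tend to infinity (possible since $\Gamma$ is nonelementary and $\partial\Omega$ is the full circle of limit points). Then $\mu(\gamma_n) \to \infty$ in every wall-direction simultaneously, because biproximality forces both simple-root projections $\alpha_1(\mu(\gamma_n))$ and $\alpha_2(\mu(\gamma_n))$ to grow — there is a uniform linear lower bound $\alpha_i(\mu(\gamma)) \geq c\,|\gamma| - C$ coming from the Anosov (equivalently, Hilbert-metric cocompactness) property. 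A Busemann-function computation then shows that for any horoball $B$ based at $\xi$, the Busemann function $b_\xi(\gamma_n o)$ tends to $-\infty$: indeed $b_\xi(\gamma_n o)$ is controlled from above by $-\min_i \alpha_i(\mu(\gamma_n)) + (\text{bounded error depending on how } \gamma_n o \text{ tracks the chamber over } \xi)$, and the error stays bounded precisely because the Furstenberg components converge. Hence $\gamma_n o \in B$ eventually, so $\xi$ is horospherical.

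For the second assertion — a unique radial limit point per Weyl chamber at infinity meeting $L_\Gamma$ — I would argue as follows. Fix a chamber $\mathfrak C$ at infinity whose closure meets $L_\Gamma$; by Theorem~\ref{thm:1.3} the slice $L_\Gamma \cap \overline{\mathfrak C}$ is a single arc $\{\eta\} \times I$ for one $\eta \in S^1$, so it suffices to find exactly one radial point on this arc. The regular direction $t_0 \in \mathrm{int}\,I$ that is the endpoint of the translation axis of the element $\gamma_\eta \in \Gamma$ fixing $\eta$ (equivalently, the direction determined by the eigenvalue ratios of $\gamma_\eta$) is radial: the cyclic group $\langle \gamma_\eta\rangle$ pushes $o$ to infinity while staying a bounded distance from the single flat through $o$ asymptotic to that direction, which is the definition. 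Conversely, if $\xi = (\eta, t)$ with $t \neq t_0$ were radial, there would be $\delta_n \in \Gamma$ with $\delta_n o \to \xi$ conically, hence staying within bounded distance of the union of chambers over $\xi$; projecting to $\Omega$, the points $\delta_n o_\Omega$ converge conically to the $\partial\Omega$-point of $\eta$, so they track the $\gamma_\eta$-axis, which forces the $I$-coordinate of $\delta_n o$ (a continuous function of the Cartan projection, by the quantitative Anosov estimates) to converge to $t_0$, not $t$ — contradiction. The main obstacle is making this last ``tracking forces $I$-coordinate $\to t_0$'' step quantitative: one needs that conical convergence in $\Omega$ (bounded Hilbert-distance to a geodesic) translates into a bounded-distance-to-a-flat statement in $X$ with the flat pinned down to direction $t_0$, which in turn rests on the fine asymptotics of $\mu(\delta_n)$ versus $\mu$ along the axis — essentially a uniform comparison between the Hilbert geodesic flow and the Weyl-chamber flow in $X$. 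I expect this comparison, rather than the horospherical statement, to be the heart of the argument.
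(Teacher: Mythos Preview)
Your uniqueness argument contains a genuine gap: you posit ``the element $\gamma_\eta \in \Gamma$ fixing $\eta$'' and build both the candidate radial direction $t_0$ and the converse contradiction around its axis. But for a generic $\eta \in \partial\Omega$ no such element exists --- the set of attracting/repelling fixed points of elements of $\Gamma$ is only a countable (dense) subset of $\partial\Omega$. So neither the construction of $t_0$ nor the ``$\delta_n$ must track the $\gamma_\eta$-axis, forcing the $I$-coordinate to $t_0$'' step is available for most chambers $W_\eta$. The paper circumvents this entirely. For existence it uses that every $[W_p]\in\Lambda_\Gamma$ is a \emph{conical limit point in the convergence-group sense}: this yields a sequence $(\gamma_n)$ with $(\gamma_n[W_p],\gamma_n[W_q])$ relatively compact off the diagonal, hence the flats $\gamma_n F_0$ converge and $\gamma_n^{-1}o$ stays within bounded distance of the fixed flat $F_0$ joining $W_p(\infty)$ and $W_q(\infty)$, producing a radial point in $W_p(\infty)$. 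For uniqueness it argues that \emph{any} radial sequence $(\gamma_n)$ for a point in $W_p(\infty)$, transported to $\Omega$ and then to the Cayley graph $C_\Gamma$, yields a quasi-geodesic ray ending at $p\in\partial C_\Gamma$; two such rays are at bounded Hausdorff distance by the Morse lemma, and since the orbit map $C_\Gamma\to X$ is a quasi-isometric embedding the two orbits in $X$ have the same endpoint. No element fixing $\eta$ is ever invoked.

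Your horospherical sketch is a different route from the paper's and is plausible in outline, but two points need care. First, the bound you state is not the right shape: for a regular $\xi$ in direction $H\in\aL^+_1$ one has $b_\xi(\gamma o)\approx -\langle H,\mu(\gamma)\rangle$ up to an error governed by the Iwasawa/Busemann cocycle, not $-\min_i\alpha_i(\mu(\gamma))$; the role of $\alpha_i(\mu(\gamma_n))\to\infty$ is rather to control that cocycle error, and ``Furstenberg components converge'' alone does not obviously make the error bounded --- you would need a quantitative continuity statement for the Iwasawa cocycle. Second, you must also check that your chosen sequence (attracting fixed points $\to\eta$) actually has the $K$-part of its Cartan decomposition converging to the chamber $\eta$, which is an extra step. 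The paper sidesteps all of this by first producing a radial limit point $z\in W_p(\infty)$ as above and then observing that any other limit point $w$ in the same chamber has Tits distance $Td(z,w)<\pi/3$; a direct Busemann estimate (using $b_{\sigma_2}(\sigma_1(t))<-t\cos Td(z,w)+D$ from \cite[Lemma~3.4]{Ha}) then shows the radial sequence for $z$ enters every horoball at $w$.
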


It is well known that the Hitchin component of the representation
variety of a surface group in $\mathrm{SL}(3,\mathbb R)$ is equal to
the deformation space of convex projective structures on the
surface \cite{CG}. Due to Theorem \ref{thm:1.3} and \ref{thm:1.4},
one can see how the structure of limit set is changed in the
Hitchin component for $\mathrm{SL}(3,\mathbb R)$ as follows: Let
$\Gamma_0$ be a discrete subgroup of $\mathrm{SL}(3,\mathbb R)$
arising from a hyperbolic structure on a closed surface.
Then it is a standard fact that the limit set of
$\Gamma_0$ in the geometric boundary of $\mathrm{SL}(3,\br)/\mathrm{SO}(3)$ is homeomorphic to a circle $S^1$ and the limit set in
any Weyl chamber at infinity, if nonempty, consists of a point.
Moreover, it can be easily seen that every limit point of $\Gamma_0$ is a radial limit point.

When $\Gamma_0$ is deformed to a discrete subgroup of $\mathrm{SL}(3,\mathbb R)$ arising from a convex real projective structure on the surface,
the limit set in each Weyl chamber at infinity with nonempty limit
set is changed from a point to an interval and thus limit set is
changed from a circle to a cylinder. Even though the limit set in
each Weyl chamber at infinity suddenly increases from a point to
an interval, it turns out due to Theorem \ref{thm:1.4} that the
set of radial limit points in each Weyl chamber at infinity does
not increase. Indeed, there exists only one point in each interval which is a
radial limit point and hence, the number of radial limit points in
each Weyl chamber at infinity is preserved under the deformation
of $\Gamma_0$. To our knowledge, this is the first example of a
concrete description of limit set in higher rank symmetric space,
except for limit sets of lattices.

All the machinery to show the above theorems work equally well for any Anosov representations, see section \ref{Anosov}.
Hence we have
\begin{thm}\label{thm:1.5}
Let
$\rho:\Gamma\ra G$ be a Zariski dense discrete $P$-Anosov representation from a word hyperbolic group $\Gamma$ where $P$ is a minimal parabolic subgroup of a semisimple Lie group $G$.  Then the geometric limit set is isomorphic to the set $\partial \Gamma\times \partial \cal L_{\rho(\Gamma)}$. Furthermore in each Weyl chamber intersecting the geometric limit set nontrivially, there is only one radial limit point.
\end{thm}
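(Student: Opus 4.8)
\begin{pf}
The strategy is to run, for an arbitrary Zariski dense Borel--Anosov representation, exactly the argument behind Theorems~\ref{thm:1.3} and \ref{thm:1.4} (see Section~\ref{Anosov}); the convex projective case is the instance $G=\mathrm{SL}(3,\br)$, $\Gamma$ a surface group. I will use the following standard features of $\rho$. Since $\rho$ is $P$-Anosov with $P$ minimal, there is a continuous $\rho$-equivariant embedding $\xi\colon\partial\Gamma\to G/P$ with image the Furstenberg limit set $\Lambda_\Gamma$, and whenever $|\gamma_n|\to\infty$ with $\gamma_n\to\zeta\in\partial\Gamma$ the Cartan attractor $k_{\gamma_n}\overline{P}$ (the $K$-part of a Cartan decomposition $\rho(\gamma_n)=k_{\gamma_n}\exp(\mu(\rho(\gamma_n)))l_{\gamma_n}$) converges to $\xi(\zeta)$. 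Moreover every simple root value $\alpha(\mu(\rho(\gamma)))$ grows linearly in $|\gamma|$, so all asymptotic directions of the Cartan projections are regular; by Benoist's description of the limit cone this forces $\cal L_{\rho(\Gamma)}\setminus\{0\}$ into the open Weyl chamber, so $\partial\cal L_{\rho(\Gamma)}$ is a compact set of regular directions. Finally $\rho$ is a Morse quasi-isometric embedding: the $\rho$-orbit of a geodesic of $\Gamma$ stays within uniformly bounded Hausdorff distance of the $X$-geodesic with the same endpoints, and the $\rho$-orbit of a geodesic ray converges to a point of $\rand$. Throughout I identify the regular part of $\rand$ with (chamber at infinity in $G/P$) $\times$ (direction in the open spherical Weyl chamber), a homeomorphism onto the regular locus.

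\emph{The product decomposition.} First I show $L_\Gamma=\Phi(\partial\Gamma\times\partial\cal L_{\rho(\Gamma)})$, where $\Phi(\eta_+,H)$ is the boundary point with chamber $\xi(\eta_+)$ and direction $H$. For the inclusion $\subseteq$: if $\rho(\gamma_n)o\to\eta$, pass to a subsequence with $\gamma_n\to\eta_+$; then $\rho(\gamma_n)o=k_{\gamma_n}\exp(\mu(\rho(\gamma_n)))o$ has chamber $\lim k_{\gamma_n}\overline{P}=\xi(\eta_+)$ and direction $\lim\mu(\rho(\gamma_n))/\|\mu(\rho(\gamma_n))\|\in\partial\cal L_{\rho(\Gamma)}$, so $\eta=\Phi(\eta_+,H)$. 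For the reverse inclusion I use that for a loxodromic $\delta\in\Gamma$ one has $\rho(\delta)^n o\to(\xi(\delta^+),\lambda(\rho(\delta))/\|\lambda(\rho(\delta))\|)$ as $n\to\infty$, and that $\rho(g)$ moves this point to $(\xi(g\delta^+),\lambda(\rho(\delta))/\|\lambda(\rho(\delta))\|)$ because $G$ acts on $\rand$ preserving directions. By Zariski density (Benoist) the directions $\lambda(\rho(\delta))/\|\lambda(\rho(\delta))\|$ are dense in $\partial\cal L_{\rho(\Gamma)}$; by minimality of the boundary action of the non-elementary hyperbolic group $\Gamma$ the orbit $\Gamma\cdot\delta^+$ is dense in $\partial\Gamma$; and $L_\Gamma$ is closed. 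Choosing $\delta$'s with Jordan direction approaching an arbitrary $H$ and, for each, group elements $g$ with $g\delta^+$ approaching an arbitrary $\eta_+$, an iterated limit puts $(\xi(\eta_+),H)$ in $L_\Gamma$. Then $\Phi$ is a continuous bijection onto $L_\Gamma$, hence a homeomorphism by compactness of $\partial\Gamma\times\partial\cal L_{\rho(\Gamma)}$; restricting to a chamber at infinity $C$ meeting $L_\Gamma$ (necessarily $C=\xi(\eta_+)$ for a unique $\eta_+$) gives $\overline{C}\cap L_\Gamma\cong\{\eta_+\}\times\partial\cal L_{\rho(\Gamma)}$.

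\emph{One radial point in each such chamber.} For existence, let $r=(r_k)$ be a geodesic ray of $\Gamma$ from $1$ to $\eta_+$; then $(\rho(r_k)o)$ is a Morse quasi-geodesic ray, so it converges to some $\eta\in L_\Gamma$ with chamber $\xi(\eta_+)$, i.e.\ $\eta\in\overline{C}$, and being Morse it stays within bounded distance of $\sigma_{o,\eta}$, which (as $\eta$ is regular) spans a single closed Weyl chamber with apex $o$; hence $\eta$ is radial. For uniqueness, suppose $\eta,\eta'\in\overline{C}\cap L_\Gamma$ are radial, via $\rho(\gamma_n)o\to\eta$ and $\rho(\delta_n)o\to\eta'$ staying boundedly close to $\sigma_{o,\eta}$, $\sigma_{o,\eta'}$. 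By the Cartan attractor property and injectivity of $\xi$, both $\gamma_n$ and $\delta_n$ converge to $\eta_+$ in $\partial\Gamma$. Applying the Morse property to $[1,\gamma_n]$, $[1,\delta_n]$ and $\mathrm{CAT}(0)$ convexity, $\rho([1,\gamma_n])o$ and $\rho([1,\delta_n])o$ lie in bounded neighbourhoods of $\sigma_{o,\eta}$ and $\sigma_{o,\eta'}$ respectively; letting $n\to\infty$, $[1,\gamma_n]$ and $[1,\delta_n]$ converge to geodesic rays of $\Gamma$ toward $\eta_+$, which fellow-travel one another, so (using the Morse property in both directions) $\sigma_{o,\eta}$ and $\sigma_{o,\eta'}$ lie at bounded Hausdorff distance. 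Two unit-speed geodesic rays from $o$ at bounded Hausdorff distance coincide, by convexity of $t\mapsto d(\sigma_{o,\eta}(t),\sigma_{o,\eta'}(t))$; hence $\eta=\eta'$.

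The main obstacle is the reverse inclusion in the first part, namely realising \emph{simultaneously} an arbitrary boundary direction of the limit cone and an arbitrary point of $\Lambda_\Gamma$ as a single geometric limit point; this rests on Benoist's structure theory for the limit cone of a Zariski dense group (density of Jordan directions, plus regularity forced by the Borel--Anosov condition) together with the iterated-limit argument above. A secondary technical point is the systematic use of the Morse property in place of the classical Morse lemma, which fails in higher rank; everything else is bookkeeping with Cartan and Jordan projections and the geometry of $\rand$.
\end{pf}
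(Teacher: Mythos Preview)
Your product-structure argument is correct and takes a slightly more direct route than the paper: you use that the Borel--Anosov condition forces all Cartan directions into the open chamber, so $\Phi$ is automatically a homeomorphism onto $L_\Gamma$, whereas the paper argues instead that distinct $\xi(t),\xi(t')$ are \emph{opposite} chambers (hence never adjacent) and then quotes Benoist for the slice identification. Your existence argument is also fine once you pass to a subsequence along which the Cartan direction converges.

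The uniqueness argument, however, has a genuine gap. You take ``$\eta$ radial'' to mean that some $\rho(\gamma_n)o\to\eta$ stays boundedly close to the geodesic ray $\sigma_{o,\eta}$, and then use CAT(0) convexity to trap $[o,\rho(\gamma_n)o]$, and hence $\rho([1,\gamma_n])o$, near $\sigma_{o,\eta}$. But the paper's definition of radial only guarantees closeness to the \emph{Weyl chamber} with apex $o$ containing $\sigma_{o,\eta}$, which in rank $\geq 2$ is a full Euclidean sector; a sequence such as $(n,\sqrt{n})$ in a quadrant converges to the direction of the $x$-axis while drifting unboundedly away from it. So CAT(0) convexity only puts $[o,\rho(\gamma_n)o]$ near the sector, not near the ray, and you cannot conclude that $\sigma_{o,\eta}$ and $\sigma_{o,\eta'}$ are at bounded Hausdorff distance. (A related overstatement: the ``Morse property'' you invoke --- orbit of a $\Gamma$-geodesic within bounded Hausdorff distance of the $X$-geodesic with the same endpoints --- is stronger than what is available; the higher-rank Morse lemma only traps such orbits in Weyl cones or diamonds.) The paper fills this gap by a different mechanism: from $d(\rho(\gamma_n)o,W)<C$ one gets that $\rho(\gamma_n)^{-1}F_0$ stays near $o$ for the flat $F_0$ joining $\xi(\eta_+)$ to $\xi(q)$, so by equivariance the pair $(\gamma_n^{-1}\eta_+,\gamma_n^{-1}q)$ subconverges off the diagonal of $\partial\Gamma\times\partial\Gamma$. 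In the hyperbolic Cayley graph this forces $(\gamma_n)$ itself into a bounded neighbourhood of a geodesic to $\eta_+$; only \emph{then} is the classical Morse lemma applied (in $C_\Gamma$, where it is valid) and transferred back via the quasi-isometric embedding. Your argument is missing exactly this ``radial $\Rightarrow$ $(\gamma_n)$ is a quasi-geodesic in $C_\Gamma$'' step.
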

Here $\partial \cal L_{\rho(\Gamma)}$ is the set of directions of limit cone $ \cal L_{\rho(\Gamma)}$. See section \ref{limitcone} for definitions.

\section{Preliminaries}

Let $G$ be a higher rank semisimple real Lie group and $X$ an
associated  symmetric space. For each $\xi \in \partial X$, there is
an associated parabolic group $P_\xi$ which is a stabilizer of $\xi$ in
$G$. Then $P_\xi$ has a generalized Iwasawa decomposition
$P_\xi=N_\xi A_\xi K_\xi$, where $K_\xi$ is a subgroup of an isotropy group of a
fixed point $\xo$ in $X$, $A_\xi\xo$ is the union of parallels to a
geodesic $l$ connecting $\xo$ and $\xi$, and $N_\xi$ is the
horospherical subgroup determined only by $\xi$. If $\xi$ is a regular
point, $P_\xi$ becomes a minimal parabolic subgroup. In this case the
group $G$ has a Iwasawa decomposition $G=KAN$, where $K$ is an
isotropy group of $\xo$, $A\xo$ is a maximal flat, and $N$ is a
nilpotent group stabilizing the regular point $\xi$. A choice of a
Weyl chamber $\aL^+$ in $\aL$, the Lie algebra of $A$,
 determines a positive root and accordingly a fundamental
system $\Upsilon$ of roots. $A^+\xi$ is also called a Weyl chamber
with an apex $\xi$ where $A^+=\exp({\aL^+})$. A choice of a subset
$\Theta \subset \Upsilon$ determines a face of $\aL^+$
$$ \aL^{\Theta}=\{H \in {\overline {\aL^+}} \ | \ \alpha(H)=0, \ \alpha \in
\Theta\}.$$
So any singular element $\xi \in \partial X$ can be represented by an element
in $\aL^{\Theta}$ for some $\Theta \subset \Upsilon$.

If $\xi$ is a singular point, $K_\xi$ is
a centraliser of  $\aL^{\Theta}$ in $K$,  $A \subset A_\xi$, and
$N_\xi \subset N$. In this case
$$G=KA_\xi N_\xi$$ is called a generalised Iwasawa decomposition. See
\cite{Eb} for details.

In terms of Lie algebras, we can describe parabolic subgroups as follows.
Let $\gL=\kL\oplus \pL$ be the Cartan decomposition, $\aL\subset \pL$ a maximal abelian subset as before.
The adjoint action of $\aL$ gives rise to a decomposition of $\gL$ into eigenspaces
$$ \gL = \bigoplus_{\alpha\in \Sigma} \gL_\alpha,\ \text{where}\ \gL_\alpha=\{x\in\gL: [a,x]=\alpha(a)x \text{ for } \forall a\in\aL\}.$$ Here $\Sigma$ is the system of restricted roots of $\gL$. Let $N_K(\aL)$ and $Z_K(\aL)$ be the normalizer and the centralizer of $\aL$ in $K$. The Weyl group $W=N_K(\aL)/Z_K(\aL)$ acts on $\aL$ and on $\Sigma$. A unique element $\omega_{op}\in W$ sending $\Sigma^-$ to $\Sigma^+$ induces an involution
$$\iota:\Sigma^+\ra \Sigma^+,\ \alpha\mapsto -\omega_{op}(\alpha),$$
called the opposite involution $\iota(\Upsilon)=\Upsilon$.

The subalgebra
$$\nL^+=\bigoplus_{\alpha\in\Sigma^+}\gL_\alpha$$ is nilpotent and $N=\exp(\nL^+)$ is unipotent.
The subgroup $B=Z_K(\aL)AN$ is a minimal parabolic subgroup with its Lie algebra $\bL^+=\gL_0\oplus \nL^+$. Similarly one can define $N^-,B^-$ using negative roots. The group $B^-$ is conjugate to $B^+$. In general, parabolic subgroups of $G$ are conjugate to subgroups containing $B^+$. A pair of parabolic subgroups is opposite if their intersection is  a reductive group. The conjugacy classes of parabolic subgroups are in one to one correspondence with subsets $\Theta \subset \Upsilon$. For each $\Theta$, let $\aL_\Theta=\cap_{\alpha\in\Theta} ker \alpha$ and $M_\Theta=Z_K(\aL_\Theta)$ its centralizer in $K$. Then
$$P^+_\Theta=M_\Theta A N \ \text{and}\ P^-_\Theta=M_\Theta A N^-$$ are opposite parabolic subgroups. Any pair of opposite parabolic subgroup is conjugate to $(P^+_\Theta,P^-_\Theta)$ for some $\Theta\subset \Upsilon$. The intersection $L_\Theta=P^+_\Theta\cap P^-_\Theta$ is the common Levi component of $P^+_\Theta$ and $P^-_\Theta$. The group $M_\Theta$ is a maximal compact subgroup of $L_\Theta$.

Note that $P^-_\Theta$ is conjugate to $P^+_{\iota(\Theta)}$. In particular $P^+_\Theta$ is conjugate to its opposite if and only if $\Theta=\iota(\Theta)$. In our case, we will deal with minimal parabolic subgroups $B^+,B^-$, hence they are opposite and conjugate.

A \emph{geometric boundary} (or ideal boundary) $\partial{X}$ of
$X$ is defined as the set of equivalence classes of geodesic rays
under the equivalence relation that two rays are equivalent if
they are within finite Hausdorff distance of each other. For any
point $x\in X$ and any ideal point $\xi \in
\partial{X}$, there exists a unique unit speed ray starting from $x$ which
represents $\xi$. The pointed Hausdorff topology on rays emanating
from $x\in X$ induces a topology on $\partial{X}$. This topology
does not depend on the base point $x$ and is called the \emph{cone
topology} on $\partial{X}$.

\begin{example}\label{keyexample} Let $G=\mathrm{SL}(d,\RR)$ and $\gL$ its Lie
algebra, the set of traceless $(d,d)$-matrices. The inner product
$\langle Y, Z \rangle= \Tr (YZ^t)$ is a positive definite inner
product on $\gL$ which is a usual inner product on $\RR^{d^2}$.
The associated symmetric space $X$ can be identified with the set
of positive definite symmetric matrices with determinant 1, and
$\mathrm{SL}(d,\RR)$ acts on it by conjugation $x \ra gxg^t$. The
isotropy group of the identity matrix $I\in \mathrm{SL}(d,\RR)$ is
$\mathrm{SO}(d)$, hence $X=\mathrm{SL}(d,\RR)/\mathrm{SO}(d)$. We
will denote $\xo$ the class of $I$ in
$\XX=\mathrm{SL}(d,\RR)/\mathrm{SO}(d)$. If $\kL$ denotes the Lie
algebra of $\mathrm{SO}(d)$, then $\gL=\kL\oplus \pL$ is a Cartan
decomposition where $\pL$ is identified with $T_\xo X$.
Furthermore,
$$\aL=\left\{\diag(\lambda_1, \lambda_2,  \ldots , \lambda_d) \ \Big| \ \sum_{i=1}^d
\lambda_i=0 \right\}$$
is a maximal abelian subspace of $\pL$ and we choose
$$\aL^+=\left\{\diag(\lambda_1, \lambda_2,  \ldots , \lambda_d) \ \Big| \ \sum_{i=1}^d
\lambda_i=0, \ \lambda_1 > \lambda_2 > \cdots > \lambda_d \right\}.$$
\end{example}

For $Y\in\pL$ with $\Vert Y\Vert=1$ we denote by $\sigma_Y$ the unique
(unit speed) geodesic such that $\sigma_Y(0)=\xo$ and
$\sigma'_Y(0)=Y$. In particular
$\sigma_Y(s)=\exp(Ys)\cdot \xo$.  Any point $\xi \in \partial X$ is
realised as $\sigma_Y(\infty)$ for some $Y\in \pL$ with $\Vert Y\Vert=1$.
Let $\lambda_1(Y) > \cdots > \lambda_k(Y)$ be distinct eigenvalues of $Y$
and $E_i(Y)$ be the eigenspace of $Y$ corresponding to $\lambda_i(Y)$.
Set $V_i(Y)=\oplus_{j=1}^i E_j(Y)$. Then we get a flag
$$V_1(Y) \subset \cdots \subset V_k(Y)=\RR^d.$$
If $m_i$ is the dimension of $E_i(Y)$, the following two conditions
\begin{eqnarray*}
\text{(i)}\hskip .5 in \sum_{i=1}^k m_i \lambda_i(Y)&=&0\\
\text{(ii)}\hskip .4 in \sum_{i=1}^k m_i \lambda_i(Y)^2&=&1
\end{eqnarray*}
 are satisfied due to the fact that $Y$ is traceless and a unit
vector.

In this way one gets a one-to-one correspondence between $\partial
X$ and the set of flags with two conditions. If $F(Y)$ is a flag
associated with a point in $\partial X$,  the action of $g \in
\mathrm{SL}(d,\RR)$ is just $gF(Y)$. The typical example is when
$Y\in \aL$ is a diagonal matrix with distinct entries. The
corresponding flag to $\mathfrak{a}^+$ is just
$$\langle e_1 \rangle\subset \langle e_1, e_2 \rangle \subset\cdots \subset \langle
e_1,\cdots,e_{d-1}\rangle \subset \RR^d.$$ Changing eigenvalues
corresponds to moving around in the same Weyl chamber. The adjacent
Weyl chamber with $\lambda_2>\lambda_1>\lambda_3>\cdots>\lambda_d$
is the flag
$$\langle e_2 \rangle\subset
\langle e_1, e_2 \rangle \subset \cdots \subset \langle
e_1,\cdots,e_{d-1}\rangle \subset \RR^d$$ and the opposite Weyl
chamber with $\lambda_d>\lambda_{d-1}>\cdots>\lambda_1$ is
\begin{eqnarray*}
\langle e_d \rangle\subset \langle e_d, e_{d-1}\rangle \subset
\cdots\subset \RR^d.
\end{eqnarray*}
Note that two Weyl chambers corresponding to two flags $V_1 \subset \cdots \subset V_d$ and
$W_1 \subset \cdots \subset W_d$ are opposite if for any $i+j=d$,
\begin{eqnarray}\label{opposite}
V_i \oplus W_j= \mathbb{R}^d
\end{eqnarray}
We refer the reader to \cite[Section 2.13]{Eb} for more details about this.

In our case, it is particularly interesting when $\xi$ is a
singular point. Let $H_1:=\sqrt{(d-1)/d}\;\diag(1,-1/(d-1),\ldots,
-1/(d-1))\in\overline{\aL_1^+}$ be a diagonal matrix with last
$d-1$ entries the same. This vector in $\pL$ denotes a (maximal)
singular direction by a geodesic $\sigma_{H_1} $ starting from
$\xo$ and ending at a point $\xi_1\in \partial X$ which we will
denote by $\infty$. Let $r$ be a singular geodesic ray which is
the image of $\sigma_{H_1}$ and
$\mathrm{SL}(d,\RR)=\mathrm{SO}(d)A_\infty N_\infty$ a generalised
Iwasawa decomposition. There is a nice description of the set
$A_\infty$ and $N_\infty$.
\begin{enumerate}
\item{$g \in N_\infty$ if and only if (i) $g_{ij}=0$ whenever
    $\lambda_j \geq \lambda_i$ and (ii) $g_{jj}=1$. So
    $N_\infty$ are upper triangular with 1's on the diagonal.}
\item{$g \in A_\infty$ if and only if $g_{ij}=0$ whenever
    $\lambda_i \neq \lambda_j$ and $g$ is symmetric positive
    definite.}
\end{enumerate}

It is not difficult to see that the union of parallels to $r$ is
$A_\infty \cdot \xo $ and is isometric to $\RR \times
\mathrm{SL}(d-1,\RR)/\mathrm{SO}(d-1)$, see \cite{Kim}. Note here
that the $\RR$-factor is exactly the singular geodesic $r$. The
orbit $N_\infty I$ is perpendicular to this set.  A level set of a
Busemann function centered at $\infty$ is $r(t_0) \times
\mathrm{SL}(d-1,\RR)/\mathrm{SO}(d-1)$ together with
$N_\infty\cdot (r(t_0)\times
\mathrm{SL}(d-1,\RR)/\mathrm{SO}(d-1))$. In matrix form an element
of $A_\infty$ looks like
\[ \left [ \begin{matrix}
       \mu &  0 \\
         0     &  M    \end{matrix} \right ] \] where $\mu>0$, and $M$ is
     a positive definite symmetric $(d-1, d-1)$-matrix with
         determinant equal to $1/\mu$. An
     element of $K_\infty$ also looks like
\[ \left [ \begin{matrix}
          \pm1  & 0 \\
           0 & M' \end{matrix} \right] \] where $M' \in \mathrm{O}(d-1)$.

Now, we recall the definition of the limit set of a discrete group.

\begin{Def}
Let $\Gamma$ be a discrete subgroup of $G$. The \emph{(geometric) limit set}
$L_\Gamma$ of $\Gamma$ is defined by $L_\Gamma= \overline{\Gamma \cdot \xo} \cap \partial X$.
\end{Def}

We remark that this definition does not depend on the chosen base point $\xo$ and
can be extended to isometry groups of arbitrary Hadamard manifolds.
We call each point of $L_\Gamma$ a \emph{limit point} of $\Gamma$.

\begin{Def}
Let $\sigma :[0,\infty) \rightarrow X$ be a geodesic ray.
The Busemann function $b(\sigma) : X \rightarrow \mathbb{R}$ associated with $\sigma$ is given by
$$b(\sigma) (x)=\lim_{t \rightarrow \infty} (d(x, \sigma(t))-t) \ \text{for } x \in X.$$
For any real number $C$, we call the set $b(\sigma)^{-1}((-\infty,C))$ an open \emph{horoball}
centered at $\sigma(\infty)$, and the set $b(\sigma)^{-1}(C)$ a \emph{horosphere} centered at $\sigma(\infty)$.
\end{Def}

It is not easy to classify limit points for arbitrary discrete groups of higher rank symmetric space but
we here give an example for which every limit point is a radial limit point as follows.

\begin{example} Let $Y=X_1\times X_2$ be a product of $\mathbb{R}$-rank one symmetric spaces. Let $\Gamma\subset \mathrm{Isom}^+(X_1)\times
\mathrm{Isom}^+(X_2)$ be a group acting freely on $Y$. Set $\Gamma_1\subset
\mathrm{Isom}^+(X_1)$ be the projection of $\Gamma$ to the first
factor. If the geometric limit set $L_\Gamma$ consists of only
regular points, then by \cite{DKL}, $\Gamma=\{(\gamma,\phi\gamma)
\ | \ \gamma\in \Gamma_1\}$ is a graph group for some type
preserving isomorphism $\phi$. Furthermore
$L_\Gamma=\Lambda_\Gamma\times [a,b]$ where $\Lambda_\Gamma$ is a
limit set in Furstenberg boundary and
$$[a,b]=\overline{\left\{\frac{l(\gamma)}{l(\phi\gamma)} \ \Big| \ \gamma\in\Gamma_1\
\text{is hyperbolic} \right\} }\subset \RR$$ is a closed interval. Here $l(\gamma)$
denotes the translation length of $\gamma$. Furthermore
for any $[(\xi_1,\xi_2),p]\in L_\Gamma$, there is a sequence of
hyperbolic isometries $\{(\gamma^1_i,\gamma^2_i)\}$ so that
$$(\gamma^1_i)^+\rightarrow \xi_1,\ (\gamma^2_i)^+\rightarrow \xi_2,\ \frac{l(\gamma^1_i)}{l(\gamma^2_i)}\rightarrow p$$
where $\gamma^+$ denotes the attractive fixed point of $\gamma$
i.e., $\gamma^+ = \lim_{j \rightarrow \infty} \gamma^j o$. This
implies that the Weyl chambers determined by $(\gamma^1_i)^+$ and
$(\gamma^2_i)^+$ converges to the Weyl chamber determined by
$\xi_1$ and $\xi_2$, and the slope of the invariant axis of
$(\gamma^1_i,\gamma^2_i)$ converge to $p$. Then it is not
difficult to show that $[(\xi_1,\xi_2),p]$ is a radial limit
point.
\end{example}

\section{Limit cone, Jordan decomposition and Cartan decomposition}\label{limitcone}
An element $g$ of  a real reductive connected linear group can be
uniquely written
$$g=ehu$$ where $e$ is elliptic (all its complex eigenvalues have
modulus 1), $h$ is hyperbolic (all the eigenvalues are real and
positive) and $u$ is unipotent ($u-I$ is nilpotent), and all three
commute \cite{Hel}. This decomposition is called the Jordan
decomposition of $g$. If $G=KAN$ is any Iwasawa decomposition of a
semisimple Lie group $G$, $e$ is conjugate to an element in $K$,
$h$ is conjugate to an element in $A$, and $u$ is conjugate to an
element in $N$ \cite{Hel}. The \emph{translation length}
$l(\alpha)$ of an isometry $\alpha$ is defined by
$$\inf_{x\in G/K} d(x,\alpha(x)).$$
It is shown in \cite{Eb} that when $G$ is a real semisimple Lie
group, for $g\in G$, if $g=ehu$ is the Jordan decomposition, then
$l(g)=l(h)$.


Fix a closed Weyl chamber $A^+\subset G$ and denote $\lambda:G\ra
A^+$ the natural projection induced from the Jordan
decomposition: for $g\in G$, $\lambda(g)$ is a unique element in
$A^+$ which is conjugate to the hyperbolic component  $h$ of
$g=ehu$. Note that $\lambda(g^n)=\lambda(g)^n$ since
$g^n=e^nh^nu^n$. For $g\in \mathrm{SL}(n,\RR)$, $\log \lambda(g)$
is the vector in ${\mathfrak a}^+$ whose coordinates are
logarithms of the absolute values of eigenvalues of $g$ arranged
in a decreasing order. Since $l(\lambda(g))=|\log(\lambda(g))|$,
we get the following corollary.
\begin{co}Let $G$ be a real semisimple Lie group, and  $g=ehu$ in $G$ in its
Jordan decomposition. Then
$$|\log(\lambda(g))|=l(\lambda(g))=l(h)=l(g).$$
\end{co}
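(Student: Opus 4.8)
The plan is to read the chain $|\log\lambda(g)|=l(\lambda(g))=l(h)=l(g)$ from right to left, establishing each equality in turn from material already in hand. The rightmost equality $l(h)=l(g)$ is nothing but the theorem of Eberlein recalled just above: for a real semisimple Lie group and the Jordan decomposition $g=ehu$, the minimal displacement of $g$ on $X=G/K$ coincides with that of its hyperbolic part $h$, the commuting elliptic and unipotent factors not lowering it. So for this step I would simply cite \cite{Eb}.

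For the middle equality $l(\lambda(g))=l(h)$ I would invoke the conjugacy-invariance of the translation length. Since $G$ acts on $X$ by isometries, if $\beta=\kappa\alpha\kappa^{-1}$ then $d(x,\beta x)=d(\kappa^{-1}x,\alpha\kappa^{-1}x)$ for every $x\in X$, and $\kappa^{-1}$ is a bijection of $X$, so taking infima gives $l(\beta)=l(\alpha)$. As $\lambda(g)$ is by definition the representative in $A^+$ of the hyperbolic part $h$, this yields $l(\lambda(g))=l(h)$.

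The remaining, and only non-formal, step is the leftmost equality $l(a)=|\log a|$ for a transvection $a=\exp(H)$ with $H\in\overline{\aL^+}\subset\pL$ (the case $H=0$ being trivial, so assume $H\ne 0$). The upper bound is immediate: let $\sigma$ be the unit-speed geodesic ray from $\xo$ in the direction $H/|H|$, so $\sigma(s)=\exp(sH/|H|)\cdot\xo$ and $a\sigma(s)=\sigma(s+|H|)$; hence $d(\xo,a\xo)=|H|$ and $l(a)\le|H|$. For the lower bound I would use the Busemann function $b=b(\sigma)$: since $a^{-1}\sigma(t)=\sigma(t-|H|)$, one computes directly
$$b(ax)=\lim_{t\to\infty}\big(d(x,a^{-1}\sigma(t))-t\big)=\lim_{s\to\infty}\big(d(x,\sigma(s))-s-|H|\big)=b(x)-|H|\qquad\text{for all }x\in X.$$
Because Busemann functions are $1$-Lipschitz, $d(x,ax)\ge|b(ax)-b(x)|=|H|$ for every $x$, so $l(a)\ge|H|$. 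Combining the two bounds gives $l(\lambda(g))=|\log\lambda(g)|$, closing the chain.

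I expect this last lower bound — that the transvection is nowhere displaced by less than $|H|$ — to be the only place needing an actual argument rather than a citation; everything else is bookkeeping around Eberlein's theorem and the isometric action of $G$. (Alternatively one may just take $l(\lambda(g))=|\log\lambda(g)|$ as a known fact about transvections on a symmetric space of noncompact type, as the preceding discussion in the excerpt does.) Once the chain is closed, the explicit description of $\log\lambda(g)$ as the vector of logarithms of the absolute values of the eigenvalues of $g$, valid for $G=\mathrm{SL}(d,\RR)$, makes the corollary fully concrete, which is presumably why it is isolated here.
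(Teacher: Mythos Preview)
Your argument is correct and follows exactly the paper's route: the paper derives the corollary from the preceding discussion by citing \cite{Eb} for $l(g)=l(h)$, using (implicitly) conjugacy-invariance for $l(h)=l(\lambda(g))$, and simply asserting $l(\lambda(g))=|\log\lambda(g)|$ as a known fact about transvections. Your Busemann-function computation for this last equality is a welcome addition of detail but not a departure in approach---indeed you already note the paper treats it as given.
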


Let $\omega_{op}$ be the element in the Weyl group of ${\mathfrak
a}$ which maps ${\mathfrak a}^+$ to $-{\mathfrak a}^+$. The
opposite involution $\iota:{\mathfrak a}^+ \ra {\mathfrak a}^+$ is
defined to be: for $X \in {\mathfrak a}^+$,
$\iota(X)=Ad\omega_{op}(-X)$. The \emph{limit cone} ${\cal L}_\Gamma$
of $\Gamma$ is the smallest closed cone in ${\mathfrak a}^+$
containing $\log(\lambda(\Gamma))$. Benoist \cite{Be} showed

\begin{thm} Let $G$ be a real linear connected semisimple Lie group.
If $\Gamma$ is a Zariski-dense sub-semigroup of $G$, then
the limit cone is convex and its interior is nonempty. If $\Gamma$
is a Zariski dense subgroup, then ${\cal L}_\Gamma$ is invariant
under the opposite involution $i$. Moreover the limit set of
$\Gamma$ in any Weyl chamber at infinity, if nonempty, is naturally
identified with the set of directions in ${\cal L}_\Gamma$.
\end{thm}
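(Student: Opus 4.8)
\emph{Sketch of proof (following Benoist).} The plan is to phrase everything in terms of two maps $G\to\overline{\aL^+}$: the Jordan projection $\log\lambda$ already introduced above, and the \emph{Cartan projection} $\mu$, defined by $g=k_1\exp(\mu(g))k_2$ with $k_1,k_2\in K$. The three assertions will all be deduced from the contraction dynamics on the flag variety $G/B$ of the \emph{loxodromic} (i.e. $\mathbb{R}$-regular) elements of $\Gamma$. First I would produce such elements: the $\mathbb{R}$-regular semisimple elements of $G$ form a nonempty Zariski-open set, so a Zariski-dense $\Gamma$ contains loxodromic elements, and moreover one can choose loxodromic elements of $\Gamma$ whose Jordan projections span $\aL$. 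Each loxodromic $g$ carries an attracting flag $\xi_g^+\in G/B$ and an opposite repelling flag $\xi_g^-$, and $g^n$ tends, uniformly on compacta, to the constant map $\xi_g^+$ on the open Bruhat cell opposite $\xi_g^-$; passing to the fundamental representations turns loxodromic into proximal and this dynamics into ordinary North--South dynamics on projective spaces.

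The technical core is a \emph{product estimate}. Call loxodromic $g,h$ transverse if $\xi_g^+$ lies in the cell opposite $\xi_h^-$ and $\xi_h^+$ in the cell opposite $\xi_g^-$ (in every fundamental representation). I would show that for $n$ large $g^nh^n$ is loxodromic with attracting flag close to $\xi_g^+$ and
$$\mu(g^nh^n)=n\bigl(\log\lambda(g)+\log\lambda(h)\bigr)+O(1),\qquad
\log\lambda(g^nh^n)=n\bigl(\log\lambda(g)+\log\lambda(h)\bigr)+O(1),$$
the error terms depending only on the transversality constants. Via the fundamental representations this reduces to the statement that, for proximal $A,B\in\mathrm{GL}(V)$ whose attracting lines avoid each other's repelling hyperplanes, $A^nB^n$ is proximal for large $n$ with leading eigenvalue modulus and leading singular value both asymptotic to the product of those of $A^n$ and $B^n$; this is precisely where the exponential contraction on $\mathbb{P}(V)$ enters and where the estimates must be made uniform.

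Granting the product estimate, the first conclusion is short. Given loxodromic $g,h\in\Gamma$, the set of $\gamma\in G$ for which $g$ and $\gamma h\gamma^{-1}$ are not transverse is a proper subvariety, so by Zariski density some $\gamma\in\Gamma$ makes them transverse; since $\lambda(\gamma h\gamma^{-1})=\lambda(h)$, the product estimate applied to $g^n(\gamma h\gamma^{-1})^n\in\Gamma$ shows the ray through $\log\lambda(g)+\log\lambda(h)$ lies in $\mathcal{L}_\Gamma$, and a closed cone stable under addition is convex. The interior is nonempty because, by the first step, $\mathcal{L}_\Gamma$ contains a family of directions spanning $\aL$, so the convex cone it generates is full-dimensional. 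When $\Gamma$ is a group, $\gamma^{-1}\in\Gamma$, and the decreasingly ordered vector of logarithms of moduli of eigenvalues of $\gamma^{-1}$ is the Weyl-chamber representative of $-\log\lambda(\gamma)$, i.e. $\log\lambda(\gamma^{-1})=\iota(\log\lambda(\gamma))$; hence $\iota(\mathcal{L}_\Gamma)=\mathcal{L}_\Gamma$.

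For the last assertion, write $\gamma_n=k_n\exp(\mu(\gamma_n))k_n'$. Then $\gamma_n\cdot\xo$ converges in the cone topology to a point $\xi\in\partial X$ lying in a prescribed Weyl chamber at infinity exactly when $k_n$ converges to the $K$-datum of that chamber flag and $\mu(\gamma_n)/\Vert\mu(\gamma_n)\Vert$ converges to the direction of $\xi$ in $\overline{\aL^+}$; so the limit set inside one Weyl chamber at infinity is canonically the set of directions accumulated by $\mu(\Gamma)$, and it remains to identify that set of directions with the set of directions in $\mathcal{L}_\Gamma$. That $\mathcal{L}_\Gamma$ lies inside it follows from $\log\lambda(g)=\lim_n\mu(g^n)/n$ together with closedness. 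For the reverse, given $\gamma_n\to\infty$ in $\Gamma$, a ping-pong argument produces one of finitely many fixed $a\in\Gamma$ with $a\gamma_n$ loxodromic and with attracting and repelling flags at definite distance, for which $\log\lambda(a\gamma_n)=\mu(a\gamma_n)+O(1)=\mu(\gamma_n)+O(1)$; thus $\mu(\Gamma)$ and $\log\lambda(\Gamma)$ accumulate the same directions. The main obstacle throughout is the product estimate and the ping-pong lemma behind this "closing up" step: the algebra is routine, but making the flag-variety contraction quantitative and uniform over all relevant loxodromic elements is where the real work lies.
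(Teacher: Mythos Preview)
The paper does not actually prove this theorem: it is quoted as a result of Benoist (the theorem is introduced by ``Benoist \cite{Be} showed'' and no proof follows), with references to \cite{Be} and \cite{Be1}. So there is no ``paper's own proof'' to compare your proposal against.

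That said, your sketch is a faithful outline of Benoist's original arguments in \cite{Be1} (and the related \cite{Be}): the existence of loxodromic elements via Zariski-openness of $\mathbb{R}$-regular elements, the reduction to proximal dynamics through fundamental representations, the product/additivity estimate for Jordan and Cartan projections of transverse loxodromic elements, convexity from additive closure, the involution-invariance from $\log\lambda(\gamma^{-1})=\iota(\log\lambda(\gamma))$, and the identification of asymptotic directions of $\mu(\Gamma)$ with those of $\log\lambda(\Gamma)$ via a finite ping-pong ``closing'' lemma. You have correctly flagged the two genuinely nontrivial inputs---the uniform product estimate and the ping-pong/closing step---as the places where the hard analysis lives. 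Since the paper only cites this theorem as background, your sketch is appropriate context rather than a replacement for anything in the paper.
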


\section{Well-displacing representations and U-property}\label{well}
 Let $\gamma$ be an isometry of a metric space $Y$. We recall
that the translation length of $\gamma$ is $d_Y(\gamma)= \inf_{x\in
Y} d(x, \gamma(x))$. We observe that $d_Y(\gamma)$ is an invariant
of the conjugacy class of $\gamma$. If $C_\Gamma$ is the Cayley
graph of a group $\Gamma$ with set of generators $S$ and word length
$\|\ \|_S$, the displacement function is called the translation
length and is denoted by $\ell_S$
$$\ell_S(\gamma) = \inf_\eta\|\eta \gamma\eta^{-1}\|_S.$$
Note that this is equal to the number of generators involved to write
$\gamma$ in a cyclically reduced way. We finally say the action by
isometries on $X$ of a group $\Gamma$ is \emph{well-displacing},
if given a set $S$ of generators of $\Gamma$, there exist positive
constants $A$ and $B$ such that
$$d_Y(\gamma) \geq A\ell_S(\gamma) - B.$$ This definition does not
depends on the choice of $S$. From the definition, it is immediate
that for $\rho:\Gamma\rightarrow \mathrm{Isom}(Y)$ to be a well-displacing
representation, it must be discrete and faithful, and the image
consists of only hyperbolic isometries.

For hyperbolic groups, well-displacing
action is equivalent to that the orbit map is a quasi-isometric
embedding from the Cayley graph $C_\Gamma$ to $Y$ \cite{gu}, i.e,
for any $x\in Y$, there exist constants $A$ and $B$ so that
$$A^{-1}\|\gamma\|-B\leq d(x,\gamma(x)) \leq A\|\gamma\|+B.$$

We say that a finitely generated group has \emph{$U$-property} if there
exists finitely many elements $g_1,\ldots,g_p$ of $\Gamma$, positive
constants $A$ and $B$ such that for any $\gamma\in \Gamma$,
$$\|\gamma\|\leq A \sup_i \ell(g_i\gamma)+B.$$
It is shown \cite{gu} that a closed surface group has $U$-property.
If a representation of a group with $U$-property is well-displacing,
then
{\setlength\arraycolsep{2pt}
\begin{eqnarray*}
d(x,\gamma x) & \geq & \sup d(g_i^{-1}x, \gamma x)-\sup d(x, g_i^{-1}x) \\
& \geq & \alpha\sup \ell(g_i\gamma)-\beta-\sup d(x,g_ix) \\
& \geq & \alpha A\|\gamma\|-B\alpha-\beta-\sup d(x,g_ix).
\end{eqnarray*}}
Also if $\gamma=\gamma_1\cdots\gamma_k$ for $\gamma_i$ in generating
set,
$$d(x,\gamma x)\leq d(x,\gamma_k x)+d(\gamma_k x,
\gamma_{k-1}\gamma_k x)+\cdots + d(\gamma_2\cdots\gamma_k x, \gamma
x)\leq C \|\gamma\|$$ for some $C$, which shows that the orbit map
is a quasi-isometric embedding. Labourie \cite{La08} showed that Hitchin representations are well displacing.
Hence the orbit map of any Hitchin representation is a quasi-isometric embedding.

\section{Limit set of convex real projective
surfaces}\label{convex} In this section we give an example of a
limit set which is a topological circle in Furstenberg boundary.
The example comes from a strictly convex real projective structure
on a closed surface. The main source is from \cite{Kim}. As far as
we know, this is the first example of a concrete description of a
limit set in higher rank symmetric space, which is quite
interesting in its own right.

A real projective structure on a manifold $M$ is a maximal atlas
$\{U_i,\phi_i\}$ into $\RP^d$ so that the transition functions
$\phi_i\circ \phi_j^{-1}$ are restrictions of projective
automorphisms of $\RP^d$. A \emph{(strictly) convex real
projective} surface
 $S$ is $\Omega/\Gamma$ where $\Omega$ is a (strictly) convex
domain in $\RP^2$ and $\Gamma$ is a discrete subgroup of
$\Aut(\RP^2)$. Up to taking a subgroup of index two, we can assume
that $\Gamma \subset \mathrm{SL}(3,\br)$.

An element $g\in
\mathrm{GL}(d,\br)$ is called \emph{proximal} if
$\lambda_1(g)>\lambda_2(g)$ where $\lambda_1(g)\geq
\lambda_2(g)\geq \cdots \geq \lambda_d(g)$ is the sequence of
modules of eigenvalues of $g$ repeated with multiplicity. It is
called \emph{biproximal} if $g^{-1}$ is also proximal. A proximal
element is called \emph{positively proximal} if the eigenvalue
corresponding to $\lambda_1(g)$ is a positive real number. When
$S=\Omega/\Gamma$ is a closed convex real projective surface with
$\chi(S)<0$, Kuiper \cite{Kui} showed that $\Omega$ is strictly
convex with $\partial\Omega$ at least $C^1$, and every
homotopically nontrivial closed curve on $S$ is freely homotopic
to a unique closed geodesic (in the Hilbert metric) which
represents a positively biproximal element in
$\mathrm{SL}(3,\br)$.

From now on, we fix $S=\Omega/\Gamma$ a strictly convex real
projective closed surface such that any element in $\Gamma\subset
\mathrm{SL}(3,\br)$ is positively biproximal and we set $X=\mathrm{SL}(3,\mathbb R)/\mathrm{SO}(3)$.
We will show that
the limit set of $\Gamma$ in the Furstenberg boundary of $X$ is a circle.
Note that by Benoist \cite{Be} if $\Omega$ is not an ellipsoid (in
ellipsoid case $\Omega$ is a real hyperbolic 2-plane), $\Gamma$ is
Zariski dense in $\mathrm{SL}(3,\br)$ and the intersection of the
limit set with a Weyl chamber at infinity, if nonempty, has nonempty interior by
\cite{Be1}. So the limit set itself cannot be homeomorphic to a
circle. By this reason we consider limit set in the Furstenberg
boundary of $X$. For a general connected semisimple Lie group with trivial center and no compact factors, the \emph{Furstenberg boundary} is homeomorphic to
$G/P$ where $P$ is a minimal parabolic subgroup. \emph{A limit set $\Lambda_\Gamma$
of $\Gamma$ in the Furstenberg boundary} is the closure of the
attracting fixed points of elements in $\Gamma$. See \cite[Lemme
2.6]{Be}.

\begin{thm}\label{circle}Let $\Gamma$ be a discrete subgroup of $\mathrm{SL}(3,\mathbb R)$
arising from a convex real projective structure on a closed surface. Then the limit set $\Lambda_\Gamma$ of $\Gamma$
in the Furstenberg boundary of $\mathrm{SL}(3,\br)/\mathrm{SO}(3)$ is a circle.
\end{thm}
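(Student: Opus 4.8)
The Furstenberg boundary of $X=\mathrm{SL}(3,\br)/\mathrm{SO}(3)$ is $G/P$ with $P$ a minimal parabolic, which for $G=\mathrm{SL}(3,\br)$ is the variety $\mathcal{F}$ of complete flags $V_1\subset V_2$ ($\dim V_i=i$) in $\br^3$. The plan is to exhibit an explicit closed curve $\mathcal{C}\subset\mathcal{F}$ built from $\partial\Omega$ together with its tangent lines, show that the attracting fixed point in $\mathcal{F}$ of every element of $\Gamma$ lies on $\mathcal{C}$, and conversely that these fixed points are dense in $\mathcal{C}$. Since $\Lambda_\Gamma$ is by definition the closure of those fixed points, this forces $\Lambda_\Gamma=\mathcal{C}\cong S^1$.

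First I would compute the attracting fixed point $\xi^+(g)\in\mathcal{F}$ of a positively biproximal $g\in\Gamma$. With modulus‑ordered eigenvalues $\lambda_1(g)>\lambda_2(g)>\lambda_3(g)>0$ (the two strict inequalities being precisely proximality of $g$ and of $g^{-1}$) and corresponding eigenlines $E_1,E_2,E_3$, iterating $g^n$ on a generic flag gives $\xi^+(g)=(E_1,\ E_1\oplus E_2)$. By Kuiper's theorem $E_1=g^+$ is the attracting fixed point of $g$ on $\RP^2$ and lies on $\partial\Omega$. To recognize the plane $E_1\oplus E_2$ I would pass to the contragredient action $g\mapsto (g^{-1})^t$ of $\Gamma$ on the dual domain $\Omega^*\subset(\RP^2)^*$: because $\Omega$ is strictly convex with $C^1$ boundary the same holds for $\Omega^*$, the element $(g^{-1})^t$ is again positively biproximal, and its attracting fixed line on $(\RP^2)^*$ is the linear functional vanishing on $E_1\oplus E_2$. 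Hence $E_1\oplus E_2$, viewed as a point of $(\RP^2)^*$, lies on $\partial\Omega^*$; equivalently it is the supporting line $T_{g^+}$ to $\Omega$ at $g^+$, which is unique by the $C^1$ hypothesis. Thus $\xi^+(g)=(g^+,T_{g^+})$.

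Set $\mathcal{C}:=\{(p,T_p):p\in\partial\Omega\}\subset\mathcal{F}$, where $T_p$ is the supporting line to $\Omega$ at $p$. Because $\partial\Omega$ is $C^1$, the map $p\mapsto T_p$ from $\partial\Omega$ to $(\RP^2)^*$ is continuous — it is the classical duality homeomorphism onto $\partial\Omega^*$ — so $p\mapsto(p,T_p)$ is a continuous injection of the circle $\partial\Omega$ into $\mathcal{F}$, hence a homeomorphism onto its image $\mathcal{C}$, which is therefore compact and closed in $\mathcal{F}$. By the previous paragraph $\xi^+(g)\in\mathcal{C}$ for every $g\in\Gamma$, so $\Lambda_\Gamma\subset\mathcal{C}$. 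For the reverse inclusion I would use that $S=\Omega/\Gamma$ is closed, so $\Gamma$ acts cocompactly (and properly) on $\Omega$; hence $\Gamma$ is nonelementary and acts minimally on $\partial\Omega$ (which is $\partial_\infty$ of the Gromov hyperbolic Hilbert space $\Omega$), so the $\Gamma$-invariant set $\{g^+:g\in\Gamma,\ g\neq 1\}$, nonempty since every nontrivial element of $\Gamma$ is biproximal, is dense in $\partial\Omega$. Therefore $\{\xi^+(g)\}$ is dense in $\mathcal{C}$, and taking closures gives $\Lambda_\Gamma=\mathcal{C}\cong S^1$. The crux — and the only place the strict convexity and $C^1$ regularity of $\Omega$ genuinely enter — is the identification of the second coordinate of $\xi^+(g)$ with the tangent line $T_{g^+}$, together with the fact that $p\mapsto(p,T_p)$ embeds $S^1$ as a closed subset of $\mathcal{F}$; the remaining steps are formal, using only the definition of $\Lambda_\Gamma$ and minimality of the boundary action.
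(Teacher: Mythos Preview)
Your proof is correct and follows essentially the same route as the paper: identify the Furstenberg boundary with the full flag variety, show that the attracting flag of each biproximal $g\in\Gamma$ is $(g^+,T_{g^+})$ with $T_{g^+}$ the tangent line to $\partial\Omega$ at $g^+$, and conclude using the $C^1$ strict convexity of $\partial\Omega$ that the closure of these flags is the embedded circle $p\mapsto(p,T_p)$. The one substantive variation is in how you identify $E_1\oplus E_2$ with the tangent line: the paper argues directly that the $g$-invariant projective line $\langle v^+,v_0\rangle$ cannot meet the interior of $\Omega$ (else $\langle v_0\rangle\in\partial\Omega$ would give a third fixed point on $\partial\Omega$), hence is tangent at $\langle v^+\rangle$; you instead pass to the contragredient action on the dual domain $\Omega^*$ and read off $E_1\oplus E_2\in\partial\Omega^*$ as the attracting fixed point of $(g^{-1})^t$. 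Both arguments are short and standard; the paper's is slightly more elementary, while yours packages the fact more conceptually and makes the density step (via minimality of the $\Gamma$-action on $\partial\Omega$) more explicit than the paper does.
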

\begin{proof}
Let $S=\Omega/\Gamma$ be the convex real projective closed surface.
For any $g\in \Gamma$, $g$ has an attracting  fixed point
$\langle v^+ \rangle$ and a repelling fixed point $\langle v^-
\rangle$ in $\partial \Omega$ corresponding to $\lambda_1(g)$ and
$\lambda_3(g)$. Since $g$ is biproximal, all the eigenvalues are
positive reals and $\lambda_1(g)>\lambda_2(g)>\lambda_3(g)$. If
$v_0$ denotes the eigenvector corresponding to $\lambda_2(g)$,
$g$ fixes a flag
$$\langle v^+ \rangle \subset \langle v^+,v_0 \rangle \subset
\br^3$$ with eigenvalues $\lambda_1(g),\lambda_2(g),\lambda_3(g)$.
Also since $g$ leaves invariant $\langle v^+,v_0 \rangle$ and
$\langle v^-,v_0\rangle$, $\langle v_0 \rangle$ is a unique
intersection point of a line  $\langle v^+ ,v_0\rangle$ and a line
 $\langle v^- ,v_0\rangle$ in $\RP^2$. Since these lines
cannot pass through the interior of $\Omega$ (otherwise $\langle
v_0 \rangle$ is on $\partial\Omega$, then $g$ will have three
fixed points on $\partial\Omega$, which is not allowed), these
lines are tangent to $\partial\Omega$ at $\langle v^+ \rangle$
and $\langle v^- \rangle$ respectively, so $\langle v_0 \rangle$
is uniquely determined by $\langle v^+ \rangle$ and $\langle v^-
\rangle$. Note that in this flag, $\langle v^+,v_0 \rangle$ is a
line tangent to $\partial\Omega$ at $\langle v^+ \rangle$ in
$\RP^2$.

This eigenvalue-flag pair is a limit point of $g^{n>0} I$ in
$\partial X$, which is a regular point in the Weyl chamber of
$\partial X$ corresponding to $\langle v^+ \rangle \subset \langle
v^+,v_0 \rangle \subset \br^3$ since
$\lambda_1(g)>\lambda_2(g)>\lambda_3(g)$, and also this flag is an
attracting fixed point of $g$ in the Furstenberg boundary of $X$. So
for any $g\in \Gamma$, $g$ determines a unique fixed point
$\langle v^+ \rangle$  on $\partial\Omega$, and in turn this
determines a unique flag $\langle v^+ \rangle \subset \langle
v^+,v_0 \rangle \subset \br^3$ where $\langle v^+, v_0 \rangle$ is
a line in $\RP^2$ tangent to $\partial\Omega$ at $\langle v^+
\rangle$, which is an attracting fixed point of $g$ in the Furstenberg
boundary of $X$.

Note that in this correspondence, for any $g\in \Gamma$, the
attracting fixed point of $g$ in Furstenberg boundary is a flag
determined by $\langle v^+ \rangle \in \partial\Omega$ and a  line
through $\langle v^+ \rangle$ tangent to $\partial \Omega$. So a
point in the closure of attracting fixed points of elements in
$\Gamma$ in the Furstenberg boundary, is determined by a tangent line
through some point on $\partial\Omega$. But a point on
$\partial\Omega$ determines a unique tangent line since $\Omega$
is strictly convex  and $\partial\Omega$ is $C^1$ by Kuiper
\cite{Kui}. This shows that the limit set of $\Gamma$ in the
Furstenberg boundary, which is the closure of attracting fixed
points of elements in $\Gamma$ by \cite{Be}, is homeomorphic to
$\partial\Omega$ which is a circle. Note here that we can apply
Benoist's theorem since $\Gamma$ is Zariski dense either in
$\mathrm{SL}(3,\br)$ (if $\Omega$ is not an ellipse) or in
$\mathrm{SO}(2,1)$ (when $\Omega$ is an ellipse).
\end{proof}

As observed already, the hyperbolic plane sits inside $X$ as
$\RR\times \mathbb H^2$ where $\RR$ is a singular geodesic. Fix $x_0\in
\mathbb H^2$, then for any geodesic $l$ through $x_0$, $\RR\times l$ is a
maximal flat in $X$. Also $l(\infty)$ is a barycenter of a Weyl
chamber so that $Td(\RR(\infty),l(\infty))=\pi/2$. If $\Gamma$ is
a Fuchsian group, then the geometric limit set is just a circle. Hence one
can expect that if we perturb a Fuchsian group to a convex
projective structure, then the geometric limit set would be a
cylinder. Indeed, by the result of \cite[Section 7.5]{Be1}, one
can show that the limit set $L_\Gamma$ of $\Gamma$ can be
identified with $\Lambda_\Gamma \times \partial\cal L_\Gamma$ where $\cal L_\Gamma$
is a limit cone. See also \cite{DKL} and \cite{Ga}. So the limit
set $L_\Gamma$ in $\partial X$ is identified to a cylinder. More rigorously,

\begin{thm}\label{limitset}Let $S=\Omega/\Gamma$ be a compact strictly convex real
projective surface. Then the limit set $\Lambda_\Gamma$ of $\Gamma$
in the Furstenberg boundary is homeomorphic to $S^{1}$ and the limit
set $L_\Gamma$ in the geometric boundary splits as a product
$S^{1}\times I$ where $I$ is the closed interval. Furthermore for
every pair of distinct points $p, q\in\partial \Omega$, the
corresponding Weyl chambers $W_p,W_q$ are opposite.
\end{thm}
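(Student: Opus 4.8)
The first assertion of the theorem is precisely Theorem~\ref{circle}, so the task is to establish the product decomposition of $L_\Gamma$ and the opposition of the Weyl chambers $W_p,W_q$. For the product structure, the plan is to combine Benoist's theorem with the fibration of the geometric boundary over the Furstenberg boundary. Since every nontrivial $\gamma\in\Gamma$ is positively biproximal, $\lambda_1(\gamma)>\lambda_2(\gamma)>\lambda_3(\gamma)>0$, so $\log\lambda(\gamma)$ lies in the open chamber and the attracting fixed point of $\gamma$ is a \emph{regular} point of $\partial X$; hence $\Lambda_\Gamma$ consists of regular flags, and to each $F\in\Lambda_\Gamma$ is attached a closed Weyl chamber at infinity $W_F\subset\partial X$ whose points are parametrised by the directions of $\overline{\mathfrak a^+}$. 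I would first show that $L_\Gamma\cap W_F\neq\emptyset$ if and only if $F\in\Lambda_\Gamma$: the implication "$\Leftarrow$" is clear since $\gamma^n o$ converges, in the direction $\log\lambda(\gamma)$, to a point of $W_{\gamma^+}$, and one then passes to closures; the reverse implication uses the convergence property of convex projective holonomies, which forces the Cartan flags $k_{\gamma_n}F_0$ of a sequence $\gamma_n\to\infty$ with $\gamma_n o$ convergent to accumulate only on $\Lambda_\Gamma$. Granting this, Benoist's theorem (applicable since $\Gamma$ is Zariski dense in $\mathrm{SL}(3,\mathbb R)$, or in $\mathrm{SO}(2,1)$ in the ellipsoid case, and via the description in \cite{Be1}) identifies $L_\Gamma\cap W_F$, whenever nonempty, with the set $\partial\mathcal{L}_\Gamma$ of directions of the limit cone, \emph{the same set for every such $F$}. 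As $\mathcal{L}_\Gamma$ is a closed convex $\iota$-invariant cone in the two-dimensional chamber $\mathfrak a^+$, its set of directions is a closed subinterval $I$ of the interval of directions of $\overline{\mathfrak a^+}$ (nondegenerate exactly when $\Omega$ is not an ellipsoid). Hence $L_\Gamma=\bigcup_{F\in\Lambda_\Gamma}\{F\}\times\partial\mathcal{L}_\Gamma$, the tautological map $\Lambda_\Gamma\times I\to L_\Gamma$ is a continuous bijection between compact Hausdorff spaces, hence a homeomorphism, and $L_\Gamma\cong S^1\times I$ by Theorem~\ref{circle}.

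For the opposition statement, recall from the proof of Theorem~\ref{circle} that the chamber $W_p$ attached to $p\in\partial\Omega$ is the one of the flag $\langle v_p\rangle\subset T_p\subset\mathbb R^3$, where $\langle v_p\rangle$ is the line representing $p$ and $T_p$ is the $2$-plane whose projectivisation $\RP(T_p)$ is the line in $\RP^2$ tangent to $\partial\Omega$ at $p$. By the opposition criterion \eqref{opposite} in dimension $3$, $W_p$ and $W_q$ are opposite if and only if $\langle v_p\rangle\oplus T_q=\mathbb R^3$ and $\langle v_q\rangle\oplus T_p=\mathbb R^3$, i.e.\ if and only if $p\notin\RP(T_q)$ and $q\notin\RP(T_p)$. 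The plan is then to invoke strict convexity together with Kuiper's $C^1$ regularity of $\partial\Omega$ \cite{Kui}: $\RP(T_p)$ is the unique supporting line of the compact convex set $\overline\Omega$ at $p$, so $\RP(T_p)\cap\overline\Omega$ is a face of $\overline\Omega$; were it not the single point $\{p\}$ it would contain a nondegenerate segment lying on that supporting line, hence contained in $\partial\Omega$, contradicting strict convexity. Therefore $\RP(T_p)\cap\overline\Omega=\{p\}$, so for $p\neq q$ one gets $q\notin\RP(T_p)$ and, symmetrically, $p\notin\RP(T_q)$; thus $W_p$ and $W_q$ are opposite.

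The opposition statement is, as above, a short unwinding of \eqref{opposite} followed by an elementary supporting-line argument, and I expect no real trouble there. The delicate point is making the product decomposition rigorous: one must check that the assignment of a limit point to the flag of its Weyl chamber behaves well on $L_\Gamma$, that exactly the flags in $\Lambda_\Gamma$ occur, and — crucially — that the fibre over each such flag is the \emph{same} closed interval $\partial\mathcal{L}_\Gamma$, rather than one that varies with $F$ or degenerates on part of $\Lambda_\Gamma$. This is exactly where Zariski density, Benoist's theorem on the limit cone, and the results of \cite{Be1} do the real work; once they are in place, compactness upgrades the set-theoretic product to a homeomorphism.
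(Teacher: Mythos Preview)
Your proposal is correct and reaches the same conclusions as the paper, but your argument for the opposition of $W_p$ and $W_q$ is more direct. The paper chooses a sequence $\gamma_n\in\Gamma$ with $\gamma_n^+\to p$, $\gamma_n^-\to q$, uses the $C^1$ regularity of $\partial\Omega$ to track the limiting intersection point $v_0$ of the two tangent lines, identifies the flags $\langle p\rangle\subset\langle p,v_0\rangle$ and $\langle q\rangle\subset\langle q,v_0\rangle$, and then appeals to~\eqref{opposite}. This approximation step is essentially redundant, since the description $W_p:\langle v_p\rangle\subset T_p$ was already established in the proof of Theorem~\ref{circle}; you bypass it and go straight to the supporting-line argument $\RP(T_p)\cap\overline\Omega=\{p\}$, which is cleaner and makes the role of strict convexity explicit (the paper's ``since $p\neq q$'' tacitly uses the same fact). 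For the product decomposition both arguments rest on the same ingredient, Benoist's identification of $L_\Gamma\cap W_F$ with $\partial\mathcal L_\Gamma$, and your compact--Hausdorff remark upgrading the continuous bijection to a homeomorphism is a welcome piece of hygiene the paper omits.

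One small structural point: the injectivity of your map $\Lambda_\Gamma\times I\to L_\Gamma$ could in principle fail if the limit cone touched a wall of $\overline{\aL^+}$ and two flags in $\Lambda_\Gamma$ gave \emph{adjacent} chambers sharing that wall. The paper handles this by explicitly naming adjacency as the only obstruction and then proving opposition to dispose of it; your opposition paragraph does the same job, but since you present the product decomposition first, you may want to flag that the bijectivity claim depends on the opposition result proved immediately afterwards.
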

\begin{proof} By Benoist \cite{Be1}, for each $p\in\partial \Omega$,
$W_p\cap L_\Gamma$ can be identified with the set of directions of the limit cone $\cal L_\Gamma$. The only
obstruction for $L_\Gamma$ to be a cylinder is that $\cal L_\Gamma$
contains a singular direction and for two distinct
$p,q\in\partial\Omega$, $W_p,W_q$ are adjacent. For two distinct points $p,q\in
\partial\Omega$, choose a
sequence $\gamma_n\in \Gamma$, so that ${\gamma_n^+}$ and ${\gamma_n^-}$ converge to $p$ and $q$ respectively. Since two lines $\langle\gamma^+_n,
\gamma^0_n\rangle$ and $\langle \gamma^-_n,\gamma^0_n\rangle$
intersect at $\langle \gamma^0_n\rangle$, and $\partial\Omega$ is
$C^1$, $\gamma^0_n\ra v_0$ in $\RP^2$. Hence these lines converge
to $\langle p, v_0 \rangle,\ \langle q, v_0\rangle.$
Two flags $$\langle p \rangle \subset \langle p,v_0 \rangle \subset \mathbb R^3 \text{ and }
\langle q \rangle \subset \langle q,v_0 \rangle \subset \mathbb R^3$$
correspond to two Weyl chambers $W_p$ and $W_q$ respectively.
Since $p\neq q$,
two Weyl chambers $W_p$ and $W_q$ are opposite due to equation (\ref{opposite}).
The same argument holds for any distinct pairs $p,q \in \partial \Omega$.
This shows that Weyl chambers corresponding to two distinct pairs
are opposite, and consequently they are not adjacent. Hence the
geometric limit set $L_\Gamma$ is homeomorphic to
$\Lambda_\Gamma\times \partial\cal L_\Gamma$.
\end{proof}

Indeed, the first statement in Theorem \ref{limitset} easily
follows from the result of Sambarino \cite{Sam} that the limit
cone of any discrete group in the Hitchin component of
$\mathrm{SL}(d,\mathbb R)$ is contained in the interior of the
Weyl chamber. This implies that every limit point in $L_\Gamma$ is regular.

\section{Characterisation of limit points in convex real
projective surfaces}

Let $K$ be an infinite compact metrisable topological space. Suppose that a
group $G$ acts by homeomorphism on $K$. A group $G$ is
said to be a \emph{convergence group} if the induced action on the
space of distinct triples of $K$ is properly discontinuous, or equivalently if  a given sequence of
distinct $g_i \in G$, there are points $c$ and $b$ of $K$ and a subsequence $(g_{n_i})$ such that
$$g_{n_i}(z) \rightarrow b$$ uniformly outside neighborhoods of $c$.
Convergence groups acting on the standard sphere or ball of
$\mathbb{R}^n$ were first introduced by Gehring and Martin
\cite{GeMa}. Then Freden \cite{Fe} and Tukia \cite{Tukia94}
generalized the notion of convergence group to groups acting on
spaces other than the sphere or the ball and having the
convergence property. For instance, a group of isometries of a
Gromov hyperbolic space can be extended to the Gromov boundary as
a convergence group \cite{Tukia94}. For further discussion, see
\cite{Bo}, \cite{Fe}, \cite{Tukia98}.

The limit set $L_G$ of $G$ is the set of limit points, where a
limit point is an accumulation point of some $G$-orbit in $K$. The
limit set is the unique minimal closed nonempty $G$-invariant
subset of $K$ and $G$ acts properly discontinuously on the
$K\backslash L_G$. A point $z\in L_G$ is said to be a
\emph{conical limit point} if there is a sequence $(g_n)$ of
distinct elements of $G$ such that, for every $x \in L_G
\backslash \{z\} $, the sequence $(g_n x, g_n z)$ is relatively
compact in $L_G \times L_G \backslash \Delta_G$ where
$\Delta_G=\{(y,y) \ | \ y\in L_G \}$.

Let $\Gamma$ be a closed surface group and $C_\Gamma$ be the Caley
graph of $\Gamma$. Since $\Gamma$ is a hyperbolic group, the
Gromov boundary $\partial C_\Gamma$ of $\Gamma$ is well defined up to H\"older homeomorphism.
Hence the group $\Gamma$ is a convergence group acting by
homeomorphism on the Gromov boundary. Furthermore, it is well
known that $L_\Gamma=\partial C_\Gamma$ and every point of
$\partial C_\Gamma$ is a conical limit point \cite{Tukia94}.

Let $S=\Omega / \Gamma$ be a strictly convex real projective
closed surface for $\Gamma \subset \mathrm{SL}(3,\br)$. Since
$\Gamma$ is a closed surface group and acts cocompactly on
$\Omega$, there is a canonical identification of $\partial
C_\Gamma$ with $\partial \Omega$. Hence, it is obvious that every
point of $\partial \Omega$ is a conical limit point with respect
to the action of $\Gamma$ on $\partial \Omega$.

Recall that the Furstenberg boundary
$\partial_F X$ of $X$ can be identified with the set of
equivalence classes of Weyl chambers in maximal flats in $X$.
Here, two Weyl chambers $W_1, W_2$ are called \emph{equivalent} if $$d_H(W_1,
W_2)< +\infty,$$ where $d_H$ is the Hausdorff distance on subsets
of $X$. For each chamber $W$, denote its equivalence class by $[W]$.
The usual angle at a point $x$ in $X$ subtended by the vectors of the centers of gravity of
Weyl chambers in the unit tangent space $S_xX$ gives rise to a metric in $\partial_F X$.
For more details, see \cite[Section 3.8]{Eb}.

Let $\langle v\rangle$ be a point in $\partial \Omega \subset \RP^2$. As we
observed in the proof of Theorem \ref{circle}, $\langle v\rangle$ uniquely determines the
flag $\langle v\rangle \subset \langle v,v_0\rangle \subset \br^3$  where $\langle v,v_0\rangle$ is the
$2$-dimensional plane corresponding to the unique tangent line to
$\partial \Omega$ at $\langle v\rangle$. This flag determines a Weyl chamber,
denoted by $W_{\langle v\rangle}$. Now define a map $\phi : \partial \Omega
\rightarrow
\partial_F X$ by $$\phi(\langle v\rangle)= [W_{\langle v\rangle}]$$ for $\langle v\rangle \in \Omega$.
It can be easily seen that this map is a $\Gamma$-equivariant homeomorphism onto its
image $\Lambda_\Gamma=\phi(\partial \Omega)$. Due to this
$\Gamma$-equivariant homeomorphism $\phi :
\partial \Omega \rightarrow \Lambda_\Gamma$, every point of $\Lambda_\Gamma$ is a conical limit point
with respect to the action of $\Gamma$ on $\Lambda_\Gamma$.

\begin{Prop}\label{prop:6.1}
Let $\Gamma$ be a discrete subgroup of $\mathrm{SL}(3,\mathbb R)$
arising from a convex projective structure on a closed surface.
Every Weyl chamber at infinity with nonempty limit set of $\Gamma$
in the geometric boundary of $\mathrm{SL}(3,\br)/\mathrm{SO}(3)$
contains at least one radial limit point.
\end{Prop}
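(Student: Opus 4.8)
The plan is to produce, for each relevant Weyl chamber $W_p$, a concrete radial limit point, realised as the limit of a $\Gamma$-orbit running along a geodesic ray aimed at $p$. First I would record which chambers are relevant: by Theorem~\ref{limitset} and Benoist~\cite{Be1} the limit set $L_\Gamma$ is identified with $\Lambda_\Gamma\times\partial\mathcal{L}_\Gamma$, and by Sambarino~\cite{Sam} the limit cone $\mathcal{L}_\Gamma$ lies in the interior of $\mathfrak{a}^+$; hence $W_p\cap L_\Gamma$ is a nonempty arc of regular points for every $p\in\partial\Omega$, and no other Weyl chamber at infinity meets $L_\Gamma$. So it is enough to find a radial limit point inside $W_p$ for an arbitrary $p\in\partial\Omega$. (For $p$ an attracting fixed point this is immediate --- the endpoint in $X$ of the translation axis of the corresponding biproximal $\gamma\in\Gamma$ lies in $W_p$ and is radial, witnessed by $\gamma^n\cdot\xo$ --- but a limit over such points would run into the non-closedness of radiality, so I take a different route.)

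Fixing $p\in\partial\Omega$ and using the identification $\partial\Omega\cong\partial C_\Gamma$ of Section~\ref{convex}, let $\gamma_0=1,\gamma_1,\gamma_2,\dots\in\Gamma$ be the successive vertices of a geodesic ray from $1$ to $p$ in the Cayley graph $C_\Gamma$; then $\gamma_n\to p$ in $\overline\Gamma$ and $\|\gamma_n\|_S=n\to\infty$. (This is just the fact that every boundary point of a word hyperbolic group is conical, the witnessing sequence being the ray itself.) The orbit map $\gamma\mapsto\gamma\cdot\xo$ is a quasi-isometric embedding of $C_\Gamma$ into $X$ (Section~\ref{well}), and the convex projective holonomy is Anosov with boundary map $p\mapsto W_p$ (the map $\phi$ of Section~\ref{convex}, cf.\ the proof of Theorem~\ref{circle}); so I would invoke the Morse property of the orbit map of an Anosov representation: the image of the ray $(\gamma_n)$ stays within a fixed distance $R$ of the unique closed Weyl chamber $C$ with apex $\xo$ whose ideal boundary is the closed chamber $\overline{W_p}$ (this tracking in $\mathrm{SL}(3,\br)/\mathrm{SO}(3)$ is exactly what \cite{Kim} analyses). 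Passing to a subsequence, the nearest-point projections of the points $\gamma_n\cdot\xo$ onto $C$ run off to infinity in a fixed direction $w\in\overline{\mathfrak{a}^+}$, so $\gamma_n\cdot\xo$ converges in the cone topology to the ideal point $\xi$ of $C$ in direction $w$, and $\xi\in\overline{W_p}$; since $\xi$ is a limit point of $\Gamma$ it is regular by \cite{Sam}, whence $\xi\in W_p$.

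It remains to see that $\xi$ is radial. As $\xi$ is regular, $\sigma_{\xo,\xi}$ is contained in a unique closed Weyl chamber with apex $\xo$, and that chamber is exactly $C$ (the unique Weyl sector from $\xo$ with ideal boundary $\overline{W_p}$); hence the union of closed Weyl chambers with apex $\xo$ that contain $\sigma_{\xo,\xi}$ is just $C$. Since $\gamma_n\cdot\xo\to\xi$ and $d(\gamma_n\cdot\xo,C)\le R$ for all $n$, the defining condition for a radial limit point is met, and $\xi\in W_p$, which proves the proposition. The real work is the middle step --- that the orbit along the ray to $p$ fellow-travels a single Weyl sector with apex $\xo$. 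The quasi-isometric embedding alone is not enough in higher rank, where even quasigeodesic rays in $X$ may fail to converge at infinity; one must use the Morse / uniform regularity behaviour of Anosov representations (equivalently the fine structure of the $\Gamma$-orbit in $X$ relative to the Hilbert geometry of $\Omega$, as developed in \cite{Kim}). Everything else --- that the tracked sector determines the limit direction, that the limit is regular and therefore lies in $W_p$, and that these two facts together force radiality --- is formal.
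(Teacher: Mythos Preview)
Your approach is correct in outline but takes a genuinely different route from the paper's proof, and the key step you flag as ``the real work'' is precisely what the paper manages to avoid.

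The paper does not use any Morse-type tracking of Cayley geodesics by Weyl sectors. Instead it argues via convergence-group dynamics on $\Lambda_\Gamma$: since $[W_p]$ is a conical limit point in Tukia's sense, there is a sequence $(\gamma_n)$ such that for every $q\neq p$ the pair $(\gamma_n[W_p],\gamma_n[W_q])$ stays off the diagonal. Passing to a subsequence, $\gamma_n[W_p]\to[W_a]$ and $\gamma_n[W_q]\to[W_b]$ with $a\neq b$; since opposite chambers determine a unique maximal flat, the flats $\gamma_n F_0$ joining $\gamma_n W_p(\infty)$ and $\gamma_n W_q(\infty)$ converge to a fixed flat $F$, so $d(\gamma_n^{-1}o,F_0)$ is bounded for some $o\in F$. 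Discreteness forces $\gamma_n^{-1}o$ to accumulate on $F_0(\infty)\cap L_\Gamma$, and varying $q$ shows the accumulation point must lie in $W_p(\infty)$; regularity then gives radiality. The only ingredients are the conical-limit-point property of hyperbolic groups, the transversality in Theorem~\ref{limitset}, and elementary convergence of pointed flats.

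Your argument instead packages everything into the assertion that the orbit of a Cayley-graph geodesic ray to $p$ stays within bounded distance of a single Weyl sector based at $\xo$. That statement is true --- it is essentially the Kapovich--Leeb--Porti ``Morse'' characterisation of Anosov representations --- but it is a substantial theorem in its own right, and neither \cite{Kim} nor \cite{GW} contains it in the form you need. So you are trading an elementary dynamical argument for an appeal to a heavier structural result outside the paper's toolkit. The payoff of your route, once that result is granted, is that the radial limit point is exhibited concretely as the endpoint of a Cayley ray, which dovetails nicely with the uniqueness argument in Theorem~\ref{thm:7.1}; the paper's route is more self-contained but less explicit about which point is produced.
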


\begin{proof}
Due to the homeomorphism $\phi : \partial \Omega \rightarrow
\Lambda_\Gamma$, every point in $\Lambda_\Gamma$ is of the form
$[W_p]$ for some $p \in \partial \Omega$. It is sufficient to
prove that the Weyl chamber $W_p$ at infinity contains a radial
limit point.

As observed before, every point in
$\Lambda_\Gamma$ is a conical limit point. Hence there is a
sequence $(\gamma_n)$ of distinct elements of $\Gamma$ such that
for every $[W_q] \in \Lambda_\Gamma \backslash \{[W_p]\}$, the
sequence $(\gamma_n [W_p], \gamma_n [W_q])$ is relatively compact
in $\Lambda_\Gamma \times \Lambda_\Gamma \backslash \Delta_\Gamma$
where $\Delta_\Gamma$ is the diagonal in $\Lambda_\Gamma \times
\Lambda_\Gamma$. Thus, by passing to a subsequence, we can assume that $\gamma_n [W_p]$
converges to $[W_a]$ and $\gamma_n [W_q]$ converges to $[W_b]$ for
some distinct points $a, b \in \partial \Omega$.

For a Weyl chamber $W$, write $W(\infty)=\overline{W} \cap
\partial X$ where $\overline{W}$ is the closure in the
compactification $X\cup \partial X$ of $X$. Note that if two
chambers $W_1$ and $W_2$ are equivalent,
$W_1(\infty)=W_2(\infty)$. Thus
$[W](\infty)=W(\infty)$ is well defined for each equivalence class $[W]$. It is a
standard fact that if two Weyl chambers in the Furstenberg
boundary are opposite, there exists a unique maximal flat
connecting them. Since any two distinct Weyl chambers in
$\Lambda_\Gamma$ are opposite by Theorem \ref{limitset}, there is
a unique maximal flat connecting $\gamma_n [W_p](\infty)$ and
$\gamma_n [W_q](\infty)$. Let $F_n$ denote such maximal flat.
Denote by $F_0$ (resp. $F$) the unique maximal flat connecting
$[W_p](\infty)$ (resp. $[W_a](\infty)$) and $[W_q](\infty)$ (resp.
$[W_b](\infty)$). Then, it is obvious that $F_n =\gamma_n F_0$.

Since $\gamma_n [W_p] (\infty)$ and $\gamma_n [W_q] (\infty)$
converge to $[W_a](\infty)$ and $[W_b](\infty)$ respectively,
$F_n$ should converge to $F$. More precisely, there is a sequence
$(o_n) \in F_n$ and $o \in F$ such that $(o_n,F_n)$ converges to
$(o,F)$ in the space of pointed flats. This implies that for any
$C>0$ there exists $N>0$ such that $$B(o,R)\cap F_n \neq
\emptyset$$ for all $n \geq N$. Thus we have $d(o,F_n)=d(o,\gamma_n
F_0)=d(\gamma_n^{-1}o, F_0) <C$. In other words, the sequence
$(\gamma_n^{-1}o)$ remains at a bounded distance $C$ of the
maximal flat $F_0$. By the discreteness of $\Gamma$,
$(\gamma_n^{-1}o)$ can not accumulate to a point in $X$. Hence it
should converge to a boundary point in $\partial X$ and moreover,
the boundary point is in $F_0(\infty)$ due to
$d(\gamma_n^{-1}o,F_0)<C$ for all sufficiently large $n$.

\begin{figure}
\includegraphics[scale=0.7]{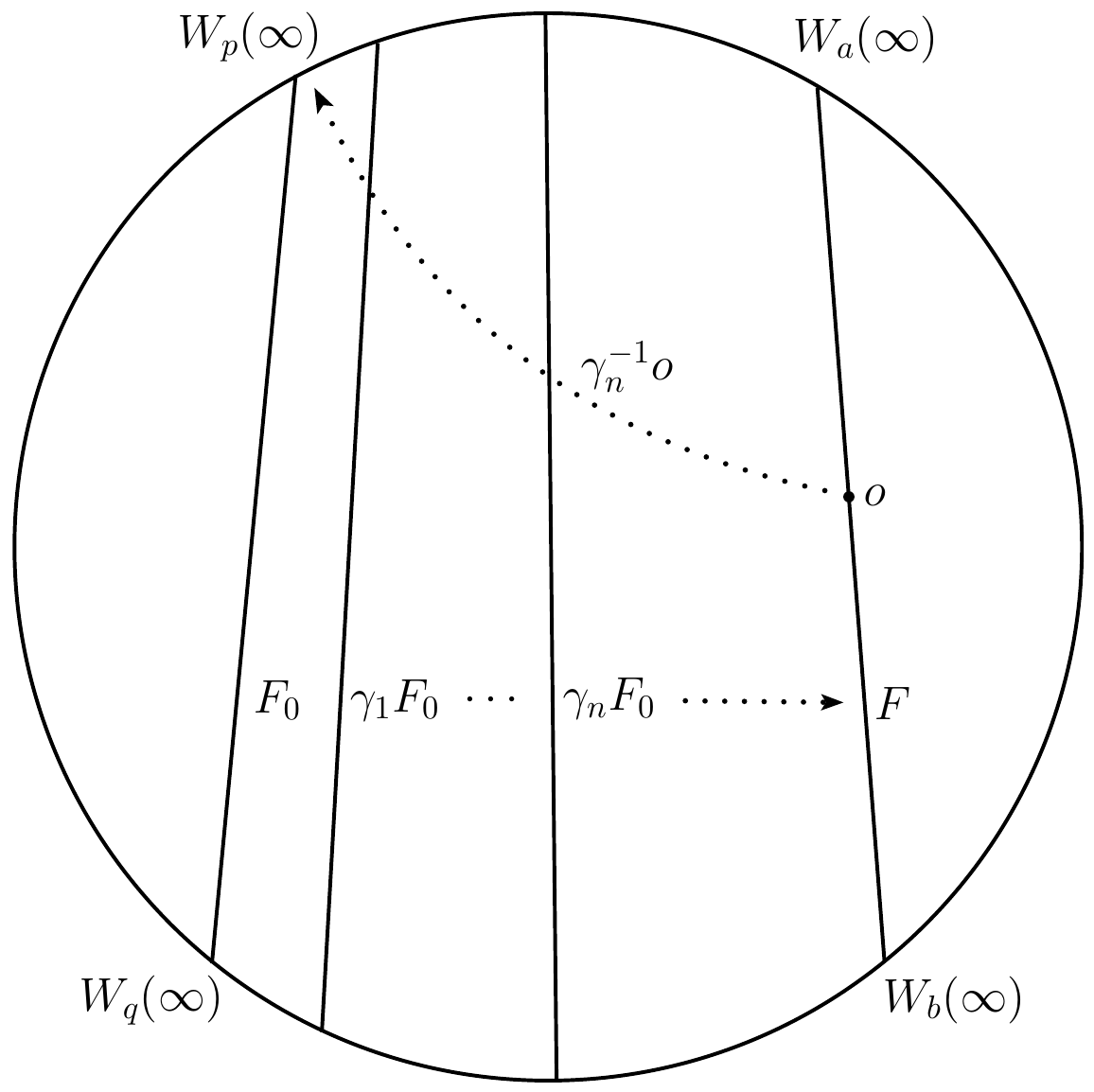}
\caption{Limit set and Weyl chambers.}
\end{figure}

Suppose that $(\gamma_n^{-1}o)$ converges to a point $z \in
F_0(\infty)$. Because $z$ is an accumulation point of the
$\Gamma$-orbit of the point $o\in X$, it should be in the limit
set $L_\Gamma$. According to Theorem \ref{limitset}, $W_p$
and $W_q$ are opposite in $F_0$ and $z$ should be in either
$W_p(\infty)$ or $W_q(\infty)$. Noting that all arguments
above hold for any $q \in \partial \Omega \backslash \{p\}$, one can easily see that $z$ should be in
$W_p(\infty)$. See the Figure $1$.

Let $Pr : X \rightarrow F_0$ be the orthogonal projection onto
$F_0$. Since the sequence $(\gamma_n^{-1}o)$
remains at a bounded distance $C$ of the maximal flat $F_0$, we have
$$d(\gamma_n^{-1}o, Pr(\gamma_n^{-1}o)) < C.$$
This implies that the sequence $(Pr(\gamma_n^{-1}o))$ also
converges to $z$. Since every limit point in $[W_p](\infty)$ is
regular as we mentioned before,
the sequence $(Pr(\gamma_n^{-1}o))$ lies in $W_p^0$ for all large
$n$ where $W_p^0$ is a Weyl chamber in $F_0$ representing $[W_p]$. This implies $$d(\gamma_n^{-1}o,
W_p^0)=d(\gamma_n^{-1}o,Pr(\gamma_n^{-1}o))<C.$$ Finally, we
can conclude that $z$ is a radial limit point in $W_p(\infty)$. This completes the proof.
\end{proof}

\begin{thm} Let $\Gamma$ be a discrete subgroup of $\mathrm{SL}(3,\mathbb R)$ arising from a convex projective structure on a closed surface. Then its limit set in the
geometric boundary of $\mathrm{SL}(3,\br)/\mathrm{SO}(3)$ consists of only
horospherical limit points.
\end{thm}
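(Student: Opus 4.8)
The plan is to deduce the statement from Proposition~\ref{prop:6.1} together with one elementary observation that is special to $\mathrm{SL}(3,\br)$: the closed Weyl chamber $\overline{\aL^+}$ of $\mathfrak{sl}_3$ is an \emph{acute} cone, so any two vectors in it subtend an angle at most $\pi/3$ and in particular have strictly positive inner product.

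\textbf{Reduction.} Let $\xi\in L_\Gamma$ be arbitrary. By Theorem~\ref{limitset} and the $\Gamma$-equivariant homeomorphism $\phi:\partial\Omega\to\Lambda_\Gamma$, there is a point $p\in\partial\Omega$ with $\xi\in W_p(\infty)$, and by the remark following Theorem~\ref{limitset} (Sambarino's theorem that the limit cone lies in the interior of $\aL^+$) every point of $L_\Gamma$, in particular $\xi$, is regular. By Proposition~\ref{prop:6.1} the chamber $W_p(\infty)$ contains a radial limit point $z$, which is likewise regular. Applying the definition of a radial limit point to $z$, we obtain a sequence $\gamma_n\in\Gamma$ and a constant $C>0$ such that $\gamma_n\xo\to z$ in the cone topology and $d(\gamma_n\xo,\overline{\mathcal W})<C$ for all $n$, where $\overline{\mathcal W}$ is the \emph{unique} closed Weyl chamber with apex $\xo$ containing $\sigma_{\xo,z}$ (unique because $z$ is regular). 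Since a regular ideal point lies in a unique chamber, $\overline{\mathcal W}(\infty)=W_p(\infty)$, so the ray $\sigma_{\xo,\xi}$ also lies in $\overline{\mathcal W}$.

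\textbf{The horospherical estimate.} I claim this same sequence shows $\xi$ is horospherical, i.e. that $b_\xi(\gamma_n\xo)\to-\infty$ for $b_\xi=b(\sigma_{\xo,\xi})$. Let $F\supset\overline{\mathcal W}$ be the maximal flat through $\xo$, and let $q_n$ be the orthogonal projection of $\gamma_n\xo$ onto $F$. Since $d(\gamma_n\xo,F)\le d(\gamma_n\xo,\overline{\mathcal W})<C$ and Busemann functions are $1$-Lipschitz, $|b_\xi(\gamma_n\xo)-b_\xi(q_n)|<C$, so it suffices to show $b_\xi(q_n)\to-\infty$. On the Euclidean flat $F$, whose ideal boundary contains $\xi$, the restriction of $b_\xi$ is affine: writing $u_n$ for the unit vector at $\xo$ in $F$ pointing toward $q_n$ and $v_\xi=\sigma'_{\xo,\xi}(0)$, one has $b_\xi(q_n)=b_\xi(\xo)-d(\xo,q_n)\,\langle u_n,v_\xi\rangle$. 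Because $q_n\to z$ in the cone topology, $d(\xo,q_n)\to\infty$ and $u_n\to v_z:=\sigma'_{\xo,z}(0)$. Now $v_\xi$ and $v_z$ both lie in the closed direction cone of $\overline{\mathcal W}$, a copy of $\overline{\aL^+}$, whose walls are the rays of $\diag(1,1,-2)$ and $\diag(2,-1,-1)$ and which therefore has angular diameter $\pi/3$; hence $\langle v_z,v_\xi\rangle\ge\cos(\pi/3)=\tfrac12$, so $\langle u_n,v_\xi\rangle\to\langle v_z,v_\xi\rangle\ge\tfrac12>0$. It follows that $b_\xi(q_n)\to-\infty$, whence $b_\xi(\gamma_n\xo)\to-\infty$, so $\gamma_n\xo$ eventually lies in any horoball centered at $\xi$; thus $\xi$ is horospherical. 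Since $\xi\in L_\Gamma$ was arbitrary, $L_\Gamma$ consists only of horospherical limit points. (Note that the same sequence $\gamma_n$, depending only on the radial point $z$ and hence only on $p$, witnesses horosphericality of \emph{every} $\xi\in W_p(\infty)$ at once.)

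\textbf{Main difficulty.} The one step requiring care is matching the chamber furnished by the radiality of $z$ with a chamber based at $\xo$ so that the identification $\overline{\mathcal W}(\infty)=W_p(\infty)$ is legitimate; in the proof of Proposition~\ref{prop:6.1} the relevant chamber is produced inside an auxiliary flat $F_0$, so one must invoke that asymptotic Weyl chambers lie at finite Hausdorff distance and that a regular boundary point determines its chamber uniquely. The genuinely $\mathrm{SL}(3,\br)$-specific ingredient is that $\overline{\aL^+}$ is acute: this is precisely what makes $\langle v_z,v_\xi\rangle>0$, and it is the point one would have to replace by a convexity argument inside the limit cone in order to push the horospherical statement to general Anosov representations.
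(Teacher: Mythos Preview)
Your proof is correct and follows essentially the same strategy as the paper's: use the radial limit point $z\in W_p(\infty)$ supplied by Proposition~\ref{prop:6.1}, project the radial orbit sequence into a flat containing the chamber, and exploit that any two directions in $\overline{\aL^+}$ for $\mathfrak{sl}_3$ make an angle at most $\pi/3$ so that the Busemann function centered at $\xi$ tends to $-\infty$ along the sequence. Your presentation is slightly more direct than the paper's---you write the Busemann function on the flat as an explicit affine map and use $u_n\to v_z$, whereas the paper isolates the angle control as a separate lemma (Lemma~\ref{SequenceInFlat}) and invokes Hattori's Lemma~3.4 for the behaviour of $b(\sigma_2)$ along $\sigma_1$---but these are repackagings of the same estimate, and your final remark that only $\langle v_z,v_\xi\rangle>0$ is needed is exactly the content of the paper's choice $\epsilon<\cos(Td(z,w))$.
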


\begin{proof}
We stick to the notation in the proof of Proposition \ref{prop:6.1}.
Let $w$ be a limit point in $W_p(\infty)$ and $z$ a radial limit point in $W_p(\infty)$. Let
$\mathcal H$ be a horoball based at $w$. The Tits distance
$Td(z,w)$ is less than $\pi/3$. Let $\sigma_1$ be the geodesic ray
emanating from a point of $F_0$ and tending to $z$. Let $\sigma_2$
be a geodesic such that $$\sigma_2(\infty)=w, \ \mathcal H=
b(\sigma_2)^{-1}((-\infty,0)),$$ where $b(\sigma_2) : X
\rightarrow \br$ is the Busemann function associated with
$\sigma_2$. To prove the theorem, we start by observing the following lemma.

\begin{lemma}\label{SequenceInFlat}
Let $F$ be a maximal flat in $X$ and $(x_n)$ be a sequence of
points in $F$ that converges to $z$ in $F(\infty)$. Let $\sigma :
[0,\infty) \rightarrow X$ be a geodesic ray in $F$ tending to $z$.
Then, for any $\epsilon>0$, there exist a sequence $(t_n)$ in
$[0,\infty)$ and $N>0$ such that $$d(\sigma(t_n), x_n) < \epsilon
t_n,$$ for all $n\geq N$.
\end{lemma}
\begin{proof}
Let $\sigma(t_n)$ be the projection point of
$x_n$ onto the geodesic ray $\sigma$. Denote by $\angle_{\sigma(0)}(x_n,
\sigma(t_n))$ the angle subtended at $\sigma(0)$ by $x_n$ and
$\sigma(t_n)$. Then, $\angle_{\sigma(0)}(x_n, \sigma(t_n))$ converges to
zero by the definition of the cone topology on $X\cup \partial X$.
Hence we have $$ \tan \angle_{\sigma(0)}(x_n, \sigma(t_n)) =
\frac{d(\sigma(t_n),x_n)}{t_n} \rightarrow 0 \text{ as }
n\rightarrow \infty .$$ Thus, for a given $\epsilon >0$, there
exists $N>0$ such that
$$ \frac{d(\sigma(t_n),x_n)}{t_n} < \epsilon.$$
for all $n \geq N$. Therefore $d(\sigma(t_n), x_n) < \epsilon t_n$ for all $n\geq N$.
\end{proof}

According to Lemma \ref{SequenceInFlat}, for any $\epsilon>0$,
there exist a sequence $(t_n)$ in $[0,\infty)$ and $N>0$ such that
$$d(\sigma_1(t_n), Pr(\gamma_n^{-1}o)) < \epsilon t_n,$$ for all
$n\geq N$. Choose an $\epsilon < \cos (Td(z,w))$. Since the norms
of gradient vectors of the Busemann function $b(\sigma_2)$ are
equal to $1$, we have $$\left| b(\sigma_2)(\gamma_n^{-1}o)-
b(\sigma_2)(Pr(\gamma_n^{-1}o)) \right| \leq d(\gamma_n^{-1}o,
Pr(\gamma_n^{-1}o)) <C.$$ In the same way, we obtain
$$\left| b(\sigma_2)(\sigma_1(t_n)) - b(\sigma_2)(Pr(\gamma_n^{-1}o)) \right|
\leq d(\sigma_1(t_n),Pr(\gamma_n^{-1}o)) <\epsilon t_n,$$
for all $n \geq N$. Furthermore, since $Td(z,w)<\pi/3$, it follows from \cite[Lemma 3.4]{Ha} that
$$b(\sigma_2)(\sigma_1(t)) < -t \cdot \cos(Td(z,w))+D,$$
for some $D>0$. Then for all $n \geq N$,
{\setlength\arraycolsep{2pt}
\begin{eqnarray*}
b(\sigma_2) (\gamma_n^{-1}o) &=& b(\sigma_2)(\gamma_n^{-1}o)-b(\sigma_2)(Pr(\gamma_n^{-1}o)) \\
&+& b(\sigma_2)(Pr(\gamma_n^{-1}o)) - b(\sigma_2)(\sigma_1(t_n)) +b(\sigma_2)(\sigma_1(t_n)) \\
&<& C + \epsilon t_n - t_n \cdot \cos(Td(z,w))+D \\
&<& C -t_n(1/2 - \epsilon) +D.
\end{eqnarray*}}
Since the sequence $(t_n)$ goes to infinity, one can choose a sufficiently large
$N>0$ such that for all $n \geq N$,
$$b(\sigma_2) (\gamma_n^{-1}o)<0.$$
Hence $\gamma_n^{-1}o \in \mathcal H$ for all $n \geq N$.
Therefore, we can conclude that $w$ is a horospherical limit
point.
\end{proof}

\section{Further characterization of limit points in convex real projective surfaces}

In the previous section, we prove that every Weyl chamber at
infinity with nonempty limit set of a discrete group $\Gamma$ in
the Hitchin component for $\mathrm{SL}(3,\mathbb R)$ has at least
one radial limit point. One can ask how many limit points in each
Weyl chamber at infinity are radial limit points. In this section,
we answer this question and describe where the set of radial limit
points is positioned in the limit set of $\Gamma$.

\begin{thm}\label{thm:7.1}
Let $\Gamma$ be a discrete subgroup of $\mathrm{SL}(3,\mathbb R)$
arising from a convex projective structure on a closed surface.
Then there is only one radial limit point in any Weyl chamber at
infinity with nonempty limit set of $\Gamma$ in the geometric
boundary of $\mathrm{SL}(3,\br)/\mathrm{SO}(3)$.
\end{thm}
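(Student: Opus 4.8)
\medskip

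The plan is to prove the stronger statement that such a Weyl chamber contains \emph{at most} one radial limit point; combined with Proposition~\ref{prop:6.1} this gives exactly one. Fix $p\in\partial\Omega$ and suppose for a contradiction that $(p,\theta_1)$ and $(p,\theta_2)$ are radial limit points of $\Gamma$ lying in $W_p(\infty)$, where $\theta_1\neq\theta_2$ are directions in the limit-cone interval $I$; by Section~\ref{convex} every limit point of $\Gamma$ is regular, so both are regular ideal points. Since $(p,\theta_i)$ is regular, the ray $\sigma_{o,(p,\theta_i)}$ is contained in a \emph{unique} closed Weyl chamber with apex $o$, and this chamber depends only on the flag $F(p)$ attached to $p$, not on $\theta_i$; call it $C_p$ and let $F_p$ be the flat it spans. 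Radiality then provides sequences $\gamma^{(i)}_n\in\Gamma$ with $\gamma^{(i)}_n o\to(p,\theta_i)$ in the cone topology and $\sup_n d(\gamma^{(i)}_n o, C_p)<\infty$. Projecting $\gamma^{(i)}_n o$ orthogonally onto $F_p$ yields points $x^{(i)}_n\in C_p$ (for $n$ large, as the limit is a regular point of $C_p(\infty)$) at bounded distance from $\gamma^{(i)}_n o$, with $d(o,x^{(i)}_n)\to\infty$; since $C_p$ is a Euclidean sector, $[o,x^{(i)}_n]$ is the initial part of the geodesic ray from $o$ in $C_p$ with angular coordinate $\phi^{(i)}_n\to\theta_i$, and by convexity of the distance function $[o,\gamma^{(i)}_n o]$ stays within bounded Hausdorff distance of $[o,x^{(i)}_n]$.

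Next I would work in the word-hyperbolic group. From $\gamma^{(i)}_n o\to(p,\theta_i)$ and the fact that this ideal point has flag $F(p)=\phi(p)$, one checks — using injectivity of the Anosov boundary map $\partial C_\Gamma\to \Lambda_\Gamma$ — that $\gamma^{(i)}_n\to p$ in $\partial C_\Gamma$; moreover $\|\gamma^{(i)}_n\|\to\infty$ because the orbit map is a quasi-isometric embedding (Section~\ref{well}). Choosing subsequences $a_k$ of $(\gamma^{(1)}_n)$ and $b_k$ of $(\gamma^{(2)}_n)$ with $(a_k\mid p)$ and $(p\mid b_k)$ tending to $\infty$, the Gromov products $(a_k\mid b_k)$ in $C_\Gamma$ tend to $\infty$, so $[1,a_k]$ and $[1,b_k]$ stay uniformly close for a time $T_k\to\infty$. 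The orbit map of a Hitchin (hence Anosov) representation is a \emph{Morse} quasi-geodesic embedding — its image is uniformly regular — so it sends $[1,a_k]$ and $[1,b_k]$ uniformly close to the genuine geodesic segments $[o,a_ko]$ and $[o,b_ko]$ of $X$; hence $[o,a_ko]$ and $[o,b_ko]$ fellow-travel for a time comparable to $T_k\to\infty$.

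Putting the two analyses together: the initial segments of $[o,a_ko]$ and $[o,b_ko]$ of length $\asymp T_k$ agree, up to bounded error, with the initial segments of the rays issuing from the apex $o$ of the Euclidean sector $C_p$ at angular coordinates $\phi^{(1)}_k\to\theta_1$ and $\psi^{(2)}_k\to\theta_2$, and these two families of segments stay within a uniformly bounded Hausdorff distance of each other. But rays from the apex of a Euclidean sector at distinct angles diverge linearly, so a uniform bound on their separation over segments of length $\to\infty$ forces $|\phi^{(1)}_k-\psi^{(2)}_k|\to 0$, and therefore $\theta_1=\theta_2$, a contradiction. (If $I$ is a single point, for instance in the ellipsoid case, there is nothing to prove.)

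The step I expect to be the real obstacle is the middle one: upgrading the fellow-traveling of the Cayley-graph geodesics to fellow-traveling of the honest geodesics $[o,a_ko]$ and $[o,b_ko]$ inside the higher-rank symmetric space $X$, where quasi-geodesics need not track geodesics. This rests on the Morse property of Anosov representations, together with some bookkeeping to match time parametrizations so that ``fellow-travel for time $\asymp T_k$'' holds on the nose; the remaining ingredients — convexity in the $\mathrm{CAT}(0)$ space $X$ and the linear divergence of rays in the flat $F_p$ — are elementary.
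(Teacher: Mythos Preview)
Your overall architecture matches the paper's: pass from $X$ to the hyperbolic group $\Gamma$, apply the Morse lemma \emph{there}, and transfer back to $X$. The substantive difference is in the transfer step, and the obstacle you flag is one the paper never encounters.

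The paper does not compare the $X$-geodesics $[o,a_k o]$ and $[o,b_k o]$ at all, so it never needs a higher-rank Morse property. Instead it shows that the orbit sequences themselves are quasi-geodesic rays in $C_\Gamma$: from $d(\gamma_n o, W_p)<C$ one gets $d(o,\gamma_n^{-1}F_0)<C_0$ for the flat $F_0$ joining $W_p(\infty)$ and $W_q(\infty)$; the flats $\gamma_n^{-1}F_0$ therefore subconverge, and by $\Gamma$-equivariance of $\phi$ this forces the Hilbert geodesics $\gamma_n^{-1} l_{pq}$ in $\Omega$ to subconverge to some $l_{ab}$. Hence $(\gamma_n e)$ stays within bounded distance of the Hilbert geodesic $l_{pq}$, which simultaneously gives $\gamma_n\to p$ in $\partial C_\Gamma$ (the step you sketch via the boundary map) \emph{and} that $(\gamma_n)$ is a quasi-geodesic ray in $C_\Gamma$. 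The classical Morse lemma in $C_\Gamma$ now says the two quasi-rays $(\gamma_n)$, $(\gamma'_n)$ are at bounded Hausdorff distance \emph{as point sets}. Since the orbit map is a quasi-isometric embedding, the point sequences $(\gamma_n o)$ and $(\gamma'_n o)$ in $X$ are at bounded Hausdorff distance; each converges in the cone topology (to $z$, $z'$), so elementary $\mathrm{CAT}(0)$ geometry gives $z=z'$.

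So the trade-off: your Gromov-product route is clean, but by insisting on comparing \emph{geodesics} in $X$ you are forced into the Kapovich--Leeb--Porti regularity machinery. The paper invests its effort upstream---using the convex domain $\Omega$ to certify that the radial sequences are already quasi-geodesic rays in $C_\Gamma$---and then downstream only needs to compare \emph{orbit points}, which survives a plain quasi-isometric embedding without any Morse statement in $X$. Your argument is correct if one grants the Anosov Morse lemma, but the obstacle you anticipate can be bypassed entirely.
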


\begin{proof}
Let $W$ be a Weyl chamber in $X=\mathrm{SL}(3,\mathbb
R)/\mathrm{SO}(3)$ such that the limit set of $\Gamma$ in
$W(\infty)$ is nonempty. As we observed before, we can assume
$W=\phi(p)=W_p$ for some $p \in \partial \Omega$. Suppose that $z$
is a radial limit point in $W_p(\infty)$. Fix a point $o\in X$. By
the definition of radial limit point, there exists a sequence
$(\gamma_n)_{n\in \mathbb N}$ of $\Gamma$  and a constant $C>0$
such that \begin{eqnarray}\label{eqn:1} d(\gamma_n o, W_p) <
C\end{eqnarray} for all $n \in\mathbb N$.

Choose a point $q \in \partial \Omega$ distinct from $p$. Since
$W_p(\infty)$ and $W_q(\infty)$ are opposite, there
exists a unique maximal flat $F_0$ connecting $W_p(\infty)$ and
$W_q(\infty)$. Inequality (\ref{eqn:1}) implies that for some
$C_0>0$,
$$d(\gamma_n o, F_0) < C_0.$$
In other words, $d(o, \gamma_n^{-1} F_0) < C_0.$ Since the
sequence $(\gamma_n^{-1} F_0)$ of maximal flats remains at a
bounded distance of a point $o \in X$, it converges to a maximal flat
$F$ in the space of pointed maximal flats (See
\cite[Section 8.3 and 8.4]{Eb}). Then the sequences
$\gamma_n^{-1} W_p(\infty)$ and $\gamma_n^{-1} W_q(\infty)$
converge to $W_a(\infty)$ and $W_b(\infty)$ in
$F(\infty)$ respectively for some $a,b\in \partial \Omega$.
Furthermore, since $\gamma_n^{-1}
W_p(\infty)$ and $\gamma_n^{-1} W_q(\infty)$ are opposite, $W_a(\infty)$ and  $W_b(\infty)$
should be opposite. This implies $ p \neq q$ and thus, there exists a unique geodesic joining $a$
and $b$ in $\Omega$.

On the other hand, due to the $\Gamma$-equivariant map $\phi :
\partial \Omega  \rightarrow \Lambda_\Gamma$, the sequences
$(\gamma_n^{-1}p)$ and $(\gamma_n^{-1}q)$ converge to $a$
and $b$ respectively. Let $l_{pq}$ be the
geodesic connecting $p$ and $q$ with respect to the Hilbert metric on $\Omega$. Then the
sequence $(\gamma_n^{-1} l_{pq})$ converges to $l_{ab}$ and thus,
for a point $e \in l_{ab}$, there is a constant $D>0$ such
that for all large $n$, $$d(\gamma_n e, l_{pq})=d(e,\gamma_n^{-1} l_{pq})<D.$$ Hence, the
sequence $(\gamma_n e)$ should converge to either $p$ or $q$.
Noting that all arguments above hold for any $q \in \Omega \backslash \{p\}$, it can be easily seen that $(\gamma_n e)$
converges to $p$. Furthermore, since the sequence $(\gamma_n e)$ remains at a bounded distance of $l_{pq}$, the
broken geodesic ray consisting of geodesic segments $[\gamma_n
e, \gamma_{n+1} e]$ becomes a quasi-geodesic ray in $\Omega$ by choosing its subsequence so that the distance
between any two points is greater than a sufficiently large
constant.

Now, let's consider the broken geodesic ray $\mathcal R$ in the Cayley
graph $C_\Gamma$ consisting of geodesic segments $[\gamma_n,
\gamma_{n+1}]$. Then the ray $\mathcal R$ is also a quasi-geodesic ray
in the Cayley graph $C_\Gamma$ because $C_\Gamma$ and $\Omega$ are
quasi-isometric by an orbit map. Via a canonical identification
between $\partial C_\Gamma$ and $\partial \Omega$, we can assume
that the sequence $\gamma_n$ converges to $p \in \partial
C_\Gamma$. According to the Morse lemma, the quasi-geodesic ray
$\mathcal R$ remains at a bounded Hausdorff distance of a geodesic ray
in $ C_\Gamma$ joining $id$ and $p$ where $id$ is the
identity element of $\Gamma$.

Suppose that $z'$ is another radial limit point in $W_p(\infty)$
and $\gamma_n'o$ converges to $z'$ with $d(\gamma_n' o, W_p)<C'$
for some $C'>0$. In the same way as above, we get a quasi-geodesic ray
$\mathcal R'$ in $C_\Gamma$ consisting of geodesic segments
$[\gamma_n', \gamma_{n+1}']$ whose endpoint is $p \in \partial C_\Gamma$.
Noting that $\mathcal R$ and $\mathcal R'$ are quasi-geodesic rays in $C_\Gamma$ with the same endpoint,
it can be easily seen by the Morse lemma that $\mathcal R'$ remains at a bounded Hausdorff distance of $\mathcal R$.
Moreover, since the orbit map $C_\Gamma \rightarrow
X$ is a quasi-isometric embedding (see section \ref{well}), two quasi-geodesic rays $\mathcal R o$ and $\mathcal R' o$ in $X$
should have the same endpoint at infinity.
This means that $z=z'$. Therefore, $W_p(\infty)$ contains exactly one
radial limit point.
\end{proof}

Theorem \ref{limitset} and \ref{thm:7.1} imply that the set of
radial limit points of $\Gamma$ is isomorphic to a circle $S^1$ in
the category of sets. Moreover, Link \cite{Ga06} proved that if
$\Gamma$ is a non-elementary discrete group, then the set of
attracting fixed points of regular axial isometries is a dense
subset of the limit set $L_\Gamma$. Since the set of radial limit
points contains the set of attractive fixed points of regular
axial isometries, the set of radial limit points is also dense in
$L_\Gamma$. Hence we have the following immediate corollary.

\begin{co}
Let $\Gamma$ be a discrete subgroup of $\mathrm{SL}(3,\mathbb R)$
arising from a convex projective structure on a closed surface.
Then the set of radial limit points of $\Gamma$ in the geometric
boundary of $\mathrm{SL}(3,\br)/\mathrm{SO}(3)$ is isomorphic to a
circle $S^1$ and dense in the limit set $L_\Gamma$ of $\Gamma$.
\end{co}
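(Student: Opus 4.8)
The goal is to deduce the corollary from Theorem~\ref{limitset}, Theorem~\ref{thm:7.1}, and Link's density result. The first half is essentially bookkeeping: by Theorem~\ref{limitset} the geometric limit set $L_\Gamma$ splits as $\Lambda_\Gamma\times I$ with $\Lambda_\Gamma\cong S^1$, and each slice $W_p(\infty)\cap L_\Gamma$ is a copy of the interval $I$ of directions of the limit cone. Theorem~\ref{thm:7.1} says each such slice contains exactly one radial limit point. So the set $R_\Gamma$ of radial limit points meets every Weyl chamber slice in precisely one point, which gives a bijection $R_\Gamma\to\Lambda_\Gamma\cong S^1$ at the level of sets. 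I would state explicitly that this bijection is what is meant by "isomorphic to a circle in the category of sets" — no topology is being claimed — and that it is induced by the (continuous, $\Gamma$-equivariant) projection $L_\Gamma\to\Lambda_\Gamma$ restricted to $R_\Gamma$.

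For the density statement, the plan is to invoke Link's theorem \cite{Ga06}: for a non-elementary discrete group the attracting fixed points of regular axial isometries form a dense subset of $L_\Gamma$. Our $\Gamma$ is non-elementary (it is a closed surface group, hence contains a free subgroup), and every element of $\Gamma$ is positively biproximal, so its attracting fixed point in $\partial_FX$ lifts to a regular point of $\partial X$ lying in the corresponding Weyl chamber. The key observation is that such an attracting fixed point of a regular axial isometry $\gamma$ is automatically a radial limit point: the powers $\gamma^n o$ converge to it while staying on the invariant axis (a geodesic in the flat through the fixed Weyl chamber), hence at bounded distance from that chamber. Therefore $R_\Gamma$ contains the set appearing in Link's theorem, and a superset of a dense set is dense; this gives density of $R_\Gamma$ in $L_\Gamma$.

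I expect the only genuine subtlety — and it is minor — to be making sure the "regular axial isometry" hypothesis in Link's result is matched with the biproximality already established for elements of $\Gamma$, i.e.\ that the axis of a biproximal element is a regular geodesic and its endpoint at infinity is the regular limit point we want. This is immediate here because biproximality forces $\lambda_1(g)>\lambda_2(g)>\lambda_3(g)$, so $\log\lambda(g)$ lies in the open Weyl chamber $\aL^+$ and the translation axis of $g$ is a regular geodesic tending to $W_{\langle v^+\rangle}(\infty)$ — exactly the picture from the proof of Theorem~\ref{circle}. Everything else is a direct concatenation of results already in hand, so the corollary follows with essentially no additional work.
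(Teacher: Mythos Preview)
Your proposal is correct and follows essentially the same route as the paper: combine Theorem~\ref{limitset} and Theorem~\ref{thm:7.1} to obtain the set-theoretic bijection $R_\Gamma\to\Lambda_\Gamma\cong S^1$, then invoke Link's density theorem~\cite{Ga06} together with the observation that attracting fixed points of regular axial isometries are radial limit points. The extra care you take in verifying that biproximal elements of $\Gamma$ are regular axial (so that Link's hypothesis is met) is a welcome elaboration of a point the paper leaves implicit.
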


\section{Anosov representations in semisimple Lie group}\label{Anosov}
A Fuchsian representation from $\pi_1(S)$ to $\mathrm{PSL}(n,\br)$, where
$S$ is a closed surface with genus $\geq 2$, is a representation
$\rho=\iota \circ \rho_0$, where $\rho_0$ is a Fuchsian
representation in $\mathrm{PSL}(2,\br)$ and $\iota$ is the irreducible
representation of $\mathrm{PSL}(2,\br)$ in $\mathrm{PSL}(n,\br)$. A Hitchin
component is the connected component of a representation variety
which contains fuchsian representations.  In \cite{La}, it is
shown that a Hitchin representation is hyperconvex and vice versa
in the following sense; a representation $\rho:\pi_1(S)\ra
\mathrm{PSL}(n,\br)$ is hyperconvex if there exists a $\rho$-equivariant
hyperconvex curve $\xi$ from $\partial_\infty\pi_1(S)$ in
$\RP^{n-1}$, i.e., for any distinct points $(x_1,\ldots,x_n)$, the
sum $\xi(x_1)+\cdots + \xi(x_n)$ is direct. Such $\xi$ is unique
and $\xi$ is called the limit curve of $\rho$. In \cite{La}, it is
shown that a Hitchin representation $\rho$ is hyperconvex and
discrete, faithful. Furthermore, $\rho(\gamma), id\neq \gamma\in
\pi_1(S)$ is real split with distinct eigenvalues. If $\gamma^+$
is the attracting fixed point of $\gamma$ in
$\partial_\infty\pi_1(S)$, then $\xi(\gamma^+)$ is the unique
attracting fixed point of $\rho(\gamma)$ in $\RP^{n-1}$.
Furthermore the limit curve $\xi$ is a hyperconvex Frenet curve:
there exists a family $(\xi=\xi^1,\xi^2,\ldots,\xi^{n-1})$ called
the osculating flag so that
\begin{enumerate}
\item{$\xi^p$ takes values in the Grassmannian of $p$-planes,}
\item{$\xi^p(x)\subset \xi^{p+1}(x)$,} \item{if $(n_1,\ldots,n_l)$
are positive integers such that $\sum n_i\leq n$ and if
$(x_1,\ldots,x_l)$ are distinct points, the sum
$$\xi^{n_1}(x_1)+\cdots + \xi^{n_l}(x_l)$$ is direct.}
\item{If $p=n_1+\cdots + n_l$, then for all distinct points $(y_1,\ldots,y_l)$,
$$\lim_{(y_1,\ldots,y_l)\ra x}\oplus \xi^{n_i}(y_i)=\xi^p(x).$$}
\end{enumerate}

Specially if we take $x\neq y$ on $\partial \pi_1(S)$, then for any $n_1+n_2=n$, $\xi^{n_1}(x)+\xi^{n_2}(y)=\br^n$.
Hence for such a representation in a Hitchin component, any two distinct points on the ideal boundary define two opposite Weyl chambers
in $\mathrm{ SL}(n,\br)/\mathrm{SO}(n)$. Then all the previous arguments work in this case also. Hence Theorem \ref{thm:1.5} immediately follows.

In \cite{GW}, this notion is generalized to the finitely generated word hyperbolic group $\Gamma$ as follows. A representation
$\rho:\Gamma\ra G$ to a semisimple Lie group $G$ is $P^+$-Anosov if there exist continuous $\rho$-equivariant maps
$\psi^+:\partial \Gamma\ra G/P^+$ and $\psi^-:\partial \Gamma\ra G/P^-$, where $P^\pm$  are two opposite parabolic subgroups, such that
\begin{enumerate}
\item for all $(t,t')\in \partial \Gamma\times \partial \Gamma \setminus \triangle$, the pair $(\psi^+(t),\psi^-(t'))$ is in the unique open $G$-orbit in $G/P^+\times G/P^-$. Here $\triangle$ is a diagonal set.
\item for all $t\in \partial \Gamma$, the pair $(\psi^+(t),\psi^-(t))$ is contained in a unique closed $G$-orbit in $G/P^+\times G/P^-$.
\item they satisfy some contraction property with respect to the flow.
\end{enumerate}

Two such examples are;
\begin{enumerate}
\item Let $G$ be a split real simple Lie group and $S$ be a closed oriented surface of genus $\geq 2$. Representations $\rho:\pi_1(S)\ra G$ in the Hitchin component are $B$-Anosov where $B\subset G$ is a Borel subgroup.
\item The holonomy representation $\rho:\pi_1(M)\ra \mathrm{PGL}(n+1,\br)$ of a strictly convex real projective structure on an $n$-dimensional manifold $M$ is $P$-Anosov where $P\subset \mathrm{PGL}(n+1,\br)$ is the stabilizer of a line.
\end{enumerate}

Hence if $\rho:\Gamma\ra G$ is a $P$-Anosov representation from a word hyperbolic group $\Gamma$ where $P$ is a minimal parabolic subgroup of a semisimple Lie group $G$. In this case $P=P^+=MAN$ and its opposite minimal parabolic subgroup $P^-=MAN^-$ are conjugate. Hence two spaces $G/P$ and $G/P^-$ are canonically identified with the set of Weyl chambers at infinity. Then by
the first property of the maps $\psi^\pm$, for any distinct elements $t,t'\in \partial \Gamma$, $\psi^+(t)$ and $\psi^-(t')$ are
opposite Weyl chambers of the symmetric space $G/K$.
Specially by the uniqueness of $\psi^\pm$, $\psi^+=\psi^-$ (see section 4.5 in \cite{GW}), and hence we obtain
\begin{lemma}The map $\psi^+$ is injective and for any distinct elements $t,t'\in \partial \Gamma$, $\psi^+(t)$ and $\psi^+(t')$ are
opposite Weyl chambers of the symmetric space $G/K$. Furthermore the orbit map $\Gamma\ra G/K$ is a quasi-isometric embedding.
\end{lemma}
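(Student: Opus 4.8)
The plan is to treat the lemma as three assertions --- injectivity of $\psi^+$, opposition of $\psi^+(t)$ and $\psi^+(t')$ for $t\neq t'$, and the quasi-isometric embedding --- and to observe that, once the dictionary assembled in the paragraphs preceding the lemma is made explicit, the first two are formal, while the quasi-isometric embedding carries the real content. Throughout I would use the following facts, already in place: since $P=P^+=MAN$ is minimal, its opposite $P^-=MAN^-$ is conjugate to $P^+$, so that $G/P^+$ and $G/P^-$ are both canonically identified with the space $\partial_F X$ of Weyl chambers at infinity of $X=G/K$; under this identification the unique open $G$-orbit in $G/P^+\times G/P^-$ is exactly the set of pairs of \emph{opposite} Weyl chambers (the transversality condition for opposite parabolics, compare (\ref{opposite})); and $\psi^+=\psi^-$ by \cite[Section 4.5]{GW}.

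Granting these, the opposition statement is immediate: for distinct $t,t'\in\partial\Gamma$, property (1) in the definition of a $P$-Anosov representation puts $(\psi^+(t),\psi^-(t'))$ in the open $G$-orbit, and since $\psi^-(t')=\psi^+(t')$ this says precisely that $\psi^+(t)$ and $\psi^+(t')$ are opposite Weyl chambers of $G/K$. Injectivity is then a one-line consequence: if $\psi^+(t)=\psi^+(t')$ with $t\neq t'$, the Weyl chamber $\psi^+(t)$ would be opposite to itself, which is impossible --- equivalently, $(\psi^+(t),\psi^-(t'))$ would lie both in the open orbit by property (1) and, using $\psi^-=\psi^+$, in the closed orbit by property (2), whereas those two orbits are disjoint. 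Hence $\psi^+$ is injective.

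What remains is that the orbit map $\gamma\mapsto\rho(\gamma)\xo$ from the Cayley graph $C_\Gamma$ to $X=G/K$ is a quasi-isometric embedding, and this is the step I expect to be the genuine obstacle. The upper bound $d(\xo,\rho(\gamma)\xo)\le C\Vert\gamma\Vert_S$ with $C=\max_{s\in S}d(\xo,\rho(s)\xo)$ is the triangle-inequality computation already recorded in Section \ref{well}. For the matching lower bound I would invoke the contraction property (3): it controls the flow along geodesic rays of $\Gamma$ and, by the now-standard argument of Guichard--Wienhard (and of Labourie in the Hitchin case, \cite{La}), converts into a uniform linear growth estimate for the Cartan projection $\mu(\rho(\gamma))\in\overline{\aL^+}$ along words, namely $\Vert\mu(\rho(\gamma))\Vert\ge A\Vert\gamma\Vert_S-B$ for constants $A>0$ and $B\ge 0$; since $d(\xo,\rho(\gamma)\xo)$ and $\Vert\mu(\rho(\gamma))\Vert$ agree up to a bounded error, this gives the lower bound. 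The shortest route is simply to cite the corresponding statement in \cite{GW}. In any case, the real work is extracting the linear lower bound from the abstract contraction hypothesis; the injectivity and opposition parts are only bookkeeping with the open and closed $G$-orbits once $\psi^+=\psi^-$ is known.
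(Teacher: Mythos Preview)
Your proposal is correct and follows essentially the same line as the paper: the opposition and injectivity are deduced from property (1) together with $\psi^+=\psi^-$ (cited from \cite[Section 4.5]{GW}), exactly as the paper does, and for the quasi-isometric embedding the paper simply cites property (ii) of Theorem 1.7 in \cite{GW}, which is the ``shortest route'' you yourself identify. Your additional sketch via growth of the Cartan projection is a reasonable gloss on what that citation unpacks to, but it is not needed for the proof as written in the paper.
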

\begin{proof}If $\psi^+(t)=\psi^+(t')$ for two distinct elements $t$ and $t'$, since $\psi^+=\psi^-$ it will contradicts the fact that $\psi^+(t)$ and $\psi^-(t')$ are opposite.
The second statement follows from the property (ii) of Theorem 1.7 in \cite{GW}.
\end{proof}

Therefore all the previous arguments work in this more general context as well.
\begin{thm}
Let
$\rho:\Gamma\ra G$ be a Zariski dense discrete $P$-Anosov representation from a word hyperbolic group $\Gamma$ where $P$ is a minimal parabolic subgroup of a semisimple Lie group $G$.  Then the geometric limit set is isomorphic to the set $\partial \Gamma\times \partial \cal L_{\rho(\Gamma)}$. Furthermore in each Weyl chamber intersecting the geometric limit set nontrivially, there is only one radial limit point.
\end{thm}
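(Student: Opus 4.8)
The plan is to run, essentially unchanged, the three ingredients already proved for convex real projective surfaces --- the product splitting of Theorem~\ref{limitset}, the existence of a radial point in each chamber of Proposition~\ref{prop:6.1}, and its uniqueness in Theorem~\ref{thm:7.1} --- feeding in the preceding Lemma as a black box. The dictionary is: the homeomorphism $\phi\colon\partial\Omega\to\Lambda_\Gamma$ is replaced by $\psi^+\colon\partial\Gamma\to G/P$, a continuous $\Gamma$-equivariant injection carrying distinct points to opposite Weyl chambers at infinity; the quasi-isometry $C_\Gamma\simeq\Omega$ is replaced by the quasi-isometric orbit map $\Gamma\to G/K$; Kuiper's strict convexity/$C^1$ hypothesis disappears, its only role (distinct limit chambers being opposite) being supplied by the Lemma; and Sambarino's theorem \cite{Sam} is replaced by the remark that a Zariski dense $P$-Anosov representation with $P$ minimal parabolic has limit cone inside the open Weyl chamber. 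For that remark: the Anosov condition makes each simple root grow linearly along the Cartan projections, $\alpha(\mu(\rho(\gamma)))\ge c\|\gamma\|-c'$, while the quasi-isometric orbit map keeps $|\mu(\rho(\gamma))|\le L\|\gamma\|+L$, so the normalized Cartan projections --- hence $\mathcal{L}_{\rho(\Gamma)}$ and every point of $L_{\rho(\Gamma)}$ realized in a chamber at infinity --- stay in a compact subset of the open chamber, i.e.\ are regular.

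First, the product splitting. Since $\rho$ is Anosov the attracting fixed point of $\rho(\gamma)$ in $G/P$ is $\psi^+(\gamma^+)$, and $\partial\Gamma$ is the closure of $\{\gamma^+\}$, so the Furstenberg limit set $\Lambda_{\rho(\Gamma)}$ is $\psi^+(\partial\Gamma)$, a homeomorphic copy of $\partial\Gamma$ by compactness. By Benoist's theorem \cite{Be} recalled in Section~\ref{limitcone}, for any Weyl chamber $W$ at infinity whose boundary simplex meets $L_{\rho(\Gamma)}$ the set $W(\infty)\cap L_{\rho(\Gamma)}$ is canonically the set $\partial\mathcal{L}_{\rho(\Gamma)}$ of directions of the limit cone; and a geometric limit point lies in a unique such chamber because, by the Lemma, the chambers $W_t(\infty)$, $t\in\partial\Gamma$, are pairwise opposite, in particular pairwise non-adjacent --- exactly the situation of Theorem~\ref{limitset}. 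Hence $L_{\rho(\Gamma)}$ splits as $\Lambda_{\rho(\Gamma)}\times\partial\mathcal{L}_{\rho(\Gamma)}\cong\partial\Gamma\times\partial\mathcal{L}_{\rho(\Gamma)}$.

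Next, existence, following Proposition~\ref{prop:6.1}. Fix a chamber $W=W_t$ meeting $L_{\rho(\Gamma)}$. As $\Gamma$ is word hyperbolic it is a convergence group on $\partial\Gamma$ with every point conical, hence (via $\psi^+$) also on $\Lambda_{\rho(\Gamma)}$. Pick $q\neq t$, let $F_0$ be the unique maximal flat carrying the opposite chambers $W_t(\infty)$, $W_q(\infty)$ in its boundary, and take a conical sequence $\gamma_n$ with $\gamma_n[W_t]\to[W_a]$, $\gamma_n[W_q]\to[W_b]$. Then $\gamma_nF_0\to F$ in the space of pointed flats (cf.\ \cite{Eb}), so $\gamma_n^{-1}o$ stays a bounded distance from $F_0$ and, by discreteness, converges to some $z\in F_0(\infty)\cap L_{\rho(\Gamma)}$; the splitting places $z\in W_t(\infty)\cup W_q(\infty)$, and varying $q$ forces $z\in W_t(\infty)$. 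Regularity of $z$ makes the nearest-point projections $\mathrm{Pr}_{F_0}(\gamma_n^{-1}o)$ eventually enter the open chamber of $F_0$ representing $[W_t]$, so $d(\gamma_n^{-1}o,W_t)$ stays bounded and $z$ is radial.

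Finally, uniqueness, following Theorem~\ref{thm:7.1}. Given radial $z,z'\in W_t(\infty)$ with $d(\gamma_no,W_t)<C$ and $d(\gamma_n'o,W_t)<C'$, the same flat-convergence argument applied to $\gamma_n^{-1}F_0$ plus $\psi^+$-equivariance gives $\gamma_n^{-1}t\to a$, $\gamma_n^{-1}q\to b$ with $a\neq b$; transporting distances along a geodesic $[a,b]$ in $C_\Gamma$ through the quasi-isometric orbit map shows the $\gamma_ne$ stay boundedly close to a geodesic $[t,q]$ in $C_\Gamma$, so $\gamma_n\to t$ in $\partial\Gamma$, and after thinning, the broken path through the $\gamma_n$ is a quasigeodesic ray landing at $t$, hence Morse-close to $[\id,t]$; the same for $\gamma_n'$, so the two broken paths are Morse-close to each other. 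The one step I expect to need care is the conclusion $z=z'$: a quasigeodesic ray in a higher rank symmetric space need not have a well-defined ideal endpoint, so instead of invoking "the common endpoint of the two rays in $X$'' as the convex projective proof literally does, I would argue directly --- the broken paths being boundedly Hausdorff-close in $C_\Gamma$, their orbit images are boundedly Hausdorff-close in $X$, so after matching indices monotonically the vertex sequences $\gamma_no\to z$ and $\gamma_n'o\to z'$ stay within a fixed distance, whence they have the same cone limit and $z=z'$. All the remaining verifications are routine transcriptions of the convex projective case.
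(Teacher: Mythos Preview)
Your proposal is correct and follows exactly the paper's strategy: the paper's own proof is the single sentence ``Therefore all the previous arguments work in this more general context as well,'' invoking the preceding Lemma, and you have faithfully spelled out that transcription. Your two additions --- the direct Anosov growth estimate for regularity of limit points in place of \cite{Sam}, and the reworked endpoint argument avoiding the claim that quasi-geodesic rays in higher rank have well-defined ideal endpoints --- are genuine improvements, patching steps that the paper's convex-projective proofs (Proposition~\ref{prop:6.1} and Theorem~\ref{thm:7.1}) already leave implicit or loosely argued.
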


\end{document}